\documentclass[11pt]{article}
\usepackage[T1]{fontenc}
\usepackage[utf8]{inputenc}
\usepackage{lmodern}
\usepackage[margin=1in]{geometry}
\usepackage{microtype}
\usepackage{amsmath,amssymb,amsthm,mathtools,mathrsfs,bm}
\usepackage{paralist}
\usepackage{comment}
\usepackage{mdframed}

\usepackage[dvipsnames]{xcolor}
\usepackage[colorlinks]{hyperref}
\hypersetup{
colorlinks=true,
    linkcolor=red,
    citecolor=OliveGreen,
    filecolor=black,
    urlcolor=black,
}

\allowdisplaybreaks[3]
\numberwithin{equation}{section}

\let\le\leqslant

\let\ge\geqslant

\usepackage{xfrac}
\usepackage{mathrsfs}
\usepackage{eucal}
\usepackage{dsfont}

\newcommand{\e}{{\mathrm{e}}}

\newtheorem{theorem}{Theorem}
\newtheorem{proposition}[theorem]{Proposition}
\newtheorem{lemma}[theorem]{Lemma}

\newtheorem{definition}{Definition}
\theoremstyle{remark}

\newcommand{\proofstep}[1]{$\textbf{#1}^{\circ}$}

\newcommand{\wt}{\widetilde}

\newcommand{\R}{\mathbb{R}}

\newcommand{\ip}[2]{\left\langle #1,#2\right\rangle}
\newcommand{\norm}[1]{\left\lVert #1\right\rVert}
\newcommand{\ones}{\mathsf{1}_d}
\newcommand{\veps}{\varepsilon}

\newcommand{\inner}[2]{\langle #1,#2\rangle}
\newcommand{\abs}[1]{\lvert #1\rvert}
\newcommand{\ri}{\operatorname{ri}}

\newcommand{\supp}{\operatorname{supp}}
\newcommand{\conv}{\operatorname{conv}}
\newcommand{\Env}{S}
\newcommand{\Ent}{\textup{\textsf{Ent}}}
\newcommand{\Diag}{\textup{Diag}}
\newcommand{\Pin}{\mathsf{\Pi}}
\newcommand{\Orc}{\mathscr{O}}
\newcommand{\Mtd}{\mathscr{M}}
\newcommand{\FOM}{\textsf{FOM}}
\let\Div\relax\newcommand{\Div}{D_h}
\let\Dh\Div
\newcommand{\Fh}{\Ent_d}
\newcommand{\Vh}{\textsf{Lip}_d}
\newcommand{\Simp}{\Delta_d}

\newcommand{\cI}{\mathcal{I}}
\newcommand{\ProjTan}{\Pi_{\Tan}}

\newcommand{\bOmega}{\boldsymbol{\Omega}}
\newcommand{\bPhi}{\boldsymbol{\Phi}}

\newcommand{\Herm}{\mathbf{H}^d}
\newcommand{\Dens}{\boldsymbol{\Delta}_d}
\newcommand{\Qd}{\mathsf{Ent}_d^\mathsf{H}}

\newcommand{\qre}[2]{D_{h \hspace{1pt} \circ \lambda}(#1,#2)}
\newcommand{\Tan}{\Theta_d}
\newcommand{\spanof}[1]{\operatorname{span}\{#1\}}

\newcommand{\Risk}{\mathsf{Risk}}
\newcommand{\prox}{\operatorname{prox}}
\DeclareMathOperator*{\argmin}{\arg\!\min}

\DeclareMathOperator*{\Argmin}{\operatorname{Arg}\!\min}

\DeclareMathOperator{\tr}{tr}
\DeclareMathOperator{\diag}{diag}

\newcommand{\bnabla}{\nabla}

\newcommand{\half}{\frac{1}{2}}

\title{Entropy-Smooth Convex Optimization Cannot Be Accelerated}
\author{
Jacob M.~Aguirre\thanks{Georgia Institute of Technology, H.~Milton Stewart School of Industrial and Systems Engineering (ISyE), Atlanta, USA. 
Email: \texttt{aguirre@gatech.edu}.}
\and 
Dmitrii M.~Ostrovskii\thanks{Georgia Institute of Technology, School of Mathematics \& H.~Milton Stewart School of Industrial and Systems Engineering (ISyE), Atlanta, USA. Email: \texttt{ostrov@gatech.edu}.}
}
\date{}

\begin{document}
\maketitle

\begin{abstract}
We prove an $\Omega(L/T)$ lower bound for the convergence rate of minimization in the class of functions that are convex and~$L$-smooth relative to negative entropy on the standard~$d$-simplex, valid for every first-order method when~$\smash{d = \Omega(T^2)}$. 
In particular, this shows that mirror descent is optimal up to a logarithmic factor in this class. This may be surprising due to the fact that accelerated methods are readily available under the assumption of smoothness in $\smash{\ell_1}$-norm.
While Dragomir et al.~(Mathematical~Programming, 2022) have already showed that acceleration might be impossible under relative smoothness, their prox-function is pathological and constructed together with the hard instance. 
In contrast, we show non-acceleration for a {specific} prox-function with particularly favorable structure.
We also extend the result to the quantum setting, proving the same lower bound in the class of functions~$L$-smooth relative to the negative von Neumann entropy on the spectrahedron of~$d \times d$ Hermitian positive-semidefinite matrices with unit trace. 


\end{abstract}

%

\section{Introduction}
\label{sec:intro}

In this paper, we study convex optimization problems on the standard simplex~$\Simp \subset \R^d$, of the form
\begin{equation}
\label{eq:intro-problem}
\min_{x\in \Simp} f(x).
\end{equation}
More precisely, we are interested in the first-order oracle complexity\footnote{In the classical sense of Nemirovski and Yudin~\cite{nemirovski1983problem}; we shall briefly recap their complexity framework in Section~\ref{sec:prelim}.} of the natural problem class in which the objective smoothness is measured with respect to negative Shannon entropy
\begin{equation}
\label{eq:intro-entropy}
h: \Simp \to \R, \qquad h(x) = \sum_{k=1}^d (x)_k \log (x)_k,
\end{equation}
where~$(x)_k$ denotes the~$k{\textup{th}}$ entry of~$x$ and we use the standard convention~$0 \log 0 = 0$.
Here, ``smoothness of one function with respect to another'' refers to the notion of {\em relative smoothness}, as defined in~\cite{BauschkeBolteTeboulle2017} and~\cite{LuFreundNesterov2018},
which we now recall in the form adapted to our setting. 
Any function~$\phi$ strictly convex on a domain~$X \subseteq \R^d$ and $C^1$ in its relative interior, defines the {Bregman divergence}
\[
D_{\phi}: X \times \ri(X) \to \R_+, 
\qquad
D_{\phi}(x,u) =  \phi(x) - \phi(u) - \langle \bnabla\phi(u), x-u \rangle,
\]
a directed measure of discrepancy of an arbitrary point~$x \in X$ from the ``center'' point~$u \in \ri(X)$. 
(In the sequel,~$X$ is assumed to lie in some affine subspace of~$\R^d$, and we write~$\bnabla f(x)$ for the gradient of~$f: X \to \R$ restricted to the affine hull of~$X$; a more detailed discussion is deferred to Section~\ref{sec:prelim}.)
For~$L>0$, a function~$f: X \to \R$ in~$C^1(\ri(X))$ is called~{\em $L$-smooth relative to~$\phi$} if
\begin{equation}
\label{eq:relative-smoothness}
f(x) - f(u) - \inner{\bnabla f(u)}{x-u} \le LD_{\phi}(x,u) 
\qquad \forall (x,u) \in X \times \ri(X).
\end{equation}
The inequality in~\eqref{eq:relative-smoothness} can be recast as~$D_f(x,u) \le LD_{\phi}(x,u)$, so~\eqref{eq:relative-smoothness} is equivalent to the convexity of~$L\phi - f$.
When~$X = \R^d$ and~$\phi(\cdot) = \frac{1}{2} \|\cdot\|_2^2$, the associated Bregman divergence is~$\frac{1}{2}\|x-u\|_2^2$, and~$L$-relative smoothness reduces to the usual Euclidean smoothness, i.e.,~$L$-Lipschitzness of~$\bnabla f$.
When~$(X,\phi) = (\Simp,h)$, the corresponding Bregman divergence gives the Kullback--Leibler divergence. 
For our purposes, it is convenient to treat the latter as an extended-value function over~$\Simp \times \Simp$,
\begin{equation}
\label{eq:intro-kl}
        \Div(x,u)
        :=
        \begin{cases}
        \displaystyle
        \sum_{k \,\in\, \supp x} (x)_k\log\frac{(x)_k}{(u)_k},
        & \supp x\subseteq\supp u,\\[1.2ex]
        +\infty,
        & \text{otherwise},
        \end{cases}
\end{equation}
with the convention~$0\log 0 = 0$.
Geometrically,~$\Div(x,u) < +\infty$ occurs when~$u$ is ``at least as interior'' as~$x$;
on \(\Simp\times\ri(\Simp)\) this definition coincides with the usual Bregman divergence generated by~$h(\cdot)$. 

%
We are interested in the~$(\Simp,h)$ case, so let us formally define the corresponding class of functions.

\begin{definition}
\label{def:entropy-smoothness}
The class~$\Fh(L)$ consists of all functions~$f: \Simp \to \R$ that are convex, continuously differentiable on~$\ri(\Simp)$, and satisfy the following inequalities for all~$(x,u) \in \Simp \times \ri(\Simp)$:
\begin{equation} 
\label{eq:intro-sandwich} 
0 \le f(x)-f(u)-\inner{\bnabla f(u)}{x-u} \le L\Div(x,u).
\end{equation}
\end{definition}
\noindent
The first inequality in \eqref{eq:intro-sandwich} is the usual first-order convexity certificate, whereas the second one is the condition of~$L$-smoothness relative to~$h$ on~$\Simp$.
Note that differentiability is only required on \(\ri(\Simp)\); yet, functions in \(\Fh(L)\) are finite and continuous on the closed simplex. In particular,~$h\in\Fh(1)$.

The notion of relative smoothness, proposed simultaneously by~Bauschke et al.~\cite{BauschkeBolteTeboulle2017} and Lu et al.~\cite{LuFreundNesterov2018}, is motivated by the simple yet striking observation: inequality~\eqref{eq:relative-smoothness} 
can be interpreted as the proximal descent lemma formulated directly in terms of the Bregman divergence at hand.
Using this observation,~\cite{BauschkeBolteTeboulle2017} and~\cite{LuFreundNesterov2018} generalized the classical analysis of mirror descent~\cite{nemirovski1983problem}, showing that
\begin{equation}
\label{eq:intro-upper-general}
f(x_T)- \min_{x \in X} f(x)\le \frac{LD_{\phi}(x^\star,x_0)}{T} \qquad \forall x^\star \in \Argmin\limits_{x \in X} f(x)
\end{equation}
after~$T$ iterations of the algorithm initialized from~$x_0$,  all without using {\em any norm} on the domain.
Specializing this to the entropy on~$\Simp$ with initialization at the barycenter~$d^{-1} \ones$, we get the bound
\begin{equation}
\label{eq:intro-upper-KL}
f(x_T)- \min_{x \in \Simp} f(x)\le \frac{L\log d}{T} \qquad \forall f \in \Fh(L).
\end{equation}
To put this into perspective, the classic analysis of mirror descent 
proceeds by choosing a norm~$\|\cdot\|$ such that~$\phi$ is~$1$-strongly convex w.r.t.~$\|\cdot\|$ on~$X$, 
which implies convexity and~$L$-smoothness in~$\|\cdot\|$,
\begin{equation*}
0 \le f(x) - f(u) - \inner{\bnabla f(u)}{x-u} \le \frac{L}{2} \|x-u\|^2
\qquad \forall x,u \in X,
\end{equation*}
and using~$1$-strong convexity of~$\phi$ at some stage of the analysis. 
This route still results in~\eqref{eq:intro-upper-general}, but only in the smaller class of functions: indeed, the above inequality implies~\eqref{eq:relative-smoothness} but not vice versa.
In particular, in the relevant to us case~$(\Simp,h)$, the suitable norm is~$\|x\|_{1} = \sum_{i = 1}^d |(x)_i|$;
adopting the name~$\Vh(L)$ for the corresponding class of functions on~$\Simp$, i.e.~those satisfying the inequalities
\begin{equation}
\label{eq:intro-l1-smoothness}
0 \le f(x) - f(u) - \inner{\bnabla f(u)}{x-u} \le \frac{L}{2} \|x-u\|_1^2
\qquad \forall x,u \in \Simp,
\end{equation}
the inclusion~$\Vh(L) \subsetneq \Fh(L)$ follows from  the~$1$-strong convexity of~$h$ w.r.t.~$\ell_1$-norm, that is Pinsker's inequality~\cite{pinsker1964information}.  In these terms, the guarantee in~\eqref{eq:intro-upper-KL} extends from~$\Vh(L)$ to~$\Fh(L)$.

One may ask whether other classical results for proximal algorithms admit similar generalizations. 
In particular, a very natural question, mentioned already in~\cite{BauschkeBolteTeboulle2017},~\cite{LuFreundNesterov2018} and more explicitly discussed by R.-A.~Dragomir in his PhD thesis~\cite{DragomirThesis2021}, is whether acceleration ``\`a la Nesterov''~\cite{nesterov1983method} is possible under relative smoothness.  
Indeed, in the class of functions convex and~$L$-smooth on a set~$X$ with respect to a given norm~$\|\cdot\|$, 
one can obtain~$O(LD_{\phi}(x^\star,x_0)T^{-2})$ error using any potential that is~$1$-strongly convex with respect to the norm~$\|\cdot\|$; see~\cite{nesterov2013first}.\,\footnote{We write~$g = O(f)$ or~$f = \Omega(g)$ if there exists a universal constant~$c > 0$ such that~$g \le cf$ for all argument values.}
With this in mind, we are led to the question:
\begin{quote}
{\em
Given~a convex domain~$X \subseteq \R^d$ and a potential~$\phi$,  is there a first-order algorithm with guaranteed~$O(T^{-2})$ convergence in the corresponding class of relatively smooth functions?
\em}
\end{quote}
One may also specialize this question to {\em concrete} domain-potential pairs. 
In particular, for~$(\Simp,h)$
\begin{equation}
\label{eq:intro-upper-KL-acc}
f(x_T)- \min_{x \in \Simp} f(x) = O\left(\frac{L\log d}{T^2}\right) \qquad \forall f \in \Vh(L),
\end{equation}
and the question we are left with is the following one:
\begin{mdframed}
\begin{quote}
\begin{center}
{\em
Can the guarantee in~\eqref{eq:intro-upper-KL-acc} be extended to the class~$\Fh(L)$?
\em}
\end{center}
\end{quote}
\end{mdframed}
To the best of our knowledge, both these questions are open.
Our paper resolves the second question.

\paragraph{Our results.}
Focusing on the case of~$(\Simp,h)$, we answer the acceleration question in the negative.
A rigorous statement of our result relies on the basic notions of black-box complexity theory~\cite{nemirovski1983problem}, to be recapped in Section~\ref{sec:prelim}.
The simplified formulation, presented next, suffices to make our point.
\begin{theorem}
\label{th:intro-lower}
Let~$L > 0$,~$T \ge 1$, and~$d = \Omega(T^2)$.
For any deterministic method that makes~$T$ queries~$x_{1}, \dots, x_{T} \in \ri(\Simp)$ of the oracle~$x \mapsto (f(x),\bnabla f(x))$ and returns~$x_{T+1} \in \Simp$, there exists~$f(\cdot) \in \Fh(L)$ such that
\begin{equation}
\label{eq:intro-lower}
f(x_{T+1}) - \min_{x \in \Simp} f(x) > \frac{L}{4(T+1)}. 
\end{equation}
\end{theorem}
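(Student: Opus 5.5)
Our plan is a resisting-oracle construction in the spirit of Nemirovski--Yudin. The goal is a family of functions in $\Fh(L)$ whose first-order information at any query reveals essentially one new coordinate direction per step. The natural building block exploits that entropy smoothness allows functions with very large curvature near the boundary of the simplex, which $\ell_1$-smoothness forbids. By scaling we may take $L=1$. The candidate hard functions are of the form
\[
f_S(x) = \max_{k\in S}\,\psi\big((x)_k\big)\ \text{(smoothed)} \qquad\text{or}\qquad f_S(x) = -\sum_{k\in S} c\,\log\!\Big(\textstyle\sum_{j} \ldots\Big),
\]
but the most promising concrete choice is a linear-plus-entropy-like objective. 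Specifically, take
\[
f_S(x) = \sum_{k\in S}\, (x)_k\log (x)_k + \big(1-x(S)\big)\log\big(1-x(S)\big),
\]
possibly rescaled, where $S\subseteq[d]$ is a hidden set of size about $T+1$ and $x(S)=\sum_{k\in S}(x)_k$. Such a function is the entropy of the pushforward of $x$ under the map merging all coordinates outside $S$. Convexity and $1$-smoothness relative to $h$ then follow from the data-processing (chain-rule) decomposition of KL divergence: $D_{f_S}(x,u)$ is the KL of the coarse-grained distributions, which lies between $0$ and $\Div(x,u)$.

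The adversary builds $S$ adaptively. Because the gradient of $f_S$ at $x$ depends only on $(x)_k$ for $k\in S$ and on $x(S)$, a query in $\ri(\Simp)$ reveals values only on the coordinates already committed to $S$. At step $t$ the adversary adds to $S$ a coordinate $k_t$ on which all previous queries put small mass. Pigeonhole with $d=\Omega(T^2)$ guarantees that some unused coordinate has mass $O(1/T)$ under every one of the $T$ queries. After $T$ queries the method's output $x_{T+1}$ is also handled by a final pigeonhole. We want the final $S$ to have $|S|=T+1$ coordinates, each carrying small mass under all queries and the output, so that the responses are consistent with every completion of $S$ (resisting-oracle consistency).

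For the gap, note that the minimum over $\Simp$ of the coarse-grained entropy is about $-\log(|S|+1)$, attained at the uniform distribution on $S$ plus the outside block. Meanwhile $x_{T+1}$ has each $S$-coordinate of size $O(1/T)$ but is otherwise unconstrained, so this design actually lets the output place mass on the outside block. The gap would then be only constant-order in $\log$ terms, not $1/(T+1)$. I therefore expect to adjust the construction to a function whose minimum value is separated from any point putting $\le$ its share on late coordinates. A better choice is a chain-like objective in which the optimum must spread mass $\approx 1/(T+1)$ on each of the $T+1$ hidden coordinates, and any point missing one coordinate pays $\gtrsim 1/(T+1)$. An example is $f_S(x)=\max_{k\in S}$-type penalties smoothed by a log-sum-exp with temperature tuned so that entropy-smoothness holds with constant $L$. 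The log-sum-exp $\log\sum_k e^{a_k}$ has its conjugate equal to entropy, which gives the needed relative smoothness through a duality argument.

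The main obstacle is exactly this step: designing $f_S$ so that three requirements hold simultaneously. It must be relatively smooth, which I would try to certify through a coarse-graining/data-processing identity. Its oracle answers at the first $t$ queries must be determined by the first $t$ chosen coordinates. And the suboptimality of any point with small mass on the last chosen coordinate must be at least $L/(4(T+1))$, which I would derive from the inequality $-\log(1-\tfrac{1}{T+1})\ge \tfrac{1}{T+1}$ applied to the optimal uniform configuration.
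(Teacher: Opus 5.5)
There is a genuine gap. You never arrive at a hard instance, and the two candidates you consider fail for a reason other than the one you diagnose. The real obstruction is exact transcript consistency, not the size of the gap.

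For the coarse-grained entropy $f_S$, your data-processing certificate of $f_S\in\Fh(1)$ is correct. The gap is also fine if the last coordinate is chosen by pigeonhole at the output alone: it then carries mass $O(1/T^2)$, which is where $d=\Omega(T^2)$ enters, and you get $f_S(x_{T+1})-\min f_S\ge\log\frac{T+2}{T+1}-O(T^{-2}\log T)$. What breaks is your claim that a query only reveals committed coordinates. With $m:=1-x(S)$ and $H_2$ the binary entropy, $f_{S\cup\{k\}}(x)-f_S(x)=-m\,H_2((x)_k/m)<0$ at every $x\in\ri(\Simp)$. So each coordinate committed after a query changes the value and gradient already reported there.

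Log-sum-exp smoothing has the same defect: every atom contributes $e^{(-(x)_k-b_k)/\mu}>0$ to the partition function at every point. Because the maps $\Phi_t$ of a method are arbitrary (possibly discontinuous), even exponentially small discrepancies invalidate the transcript. Moreover, the conjugate-duality argument you cite only certifies $\ell_\infty$- (hence $\ell_1$-) smoothness with constant $1/\mu$, i.e.\ membership in $\Vh(1/\mu)$, where acceleration is available. Exploiting entropy curvature instead requires bounding $\sup_k p_k(x)(x)_k/\mu$, with $p(x)$ the softmax weights. At the temperature needed to keep the bias $\mu\log(T+1)$ below $1/(T+1)$, this is of order $\log T$, so even a repaired version would lose a logarithmic factor against the stated bound.

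What is missing is a smoothing operator that is both entropy-compatible and \emph{local}. The paper takes the backbone $g_t(u)=\max_{s\le t}\{-(u)_{j_s}-(s-1)\delta_T\}$ and sets $f_t=\frac12\Env_{1/2}[g_t]$, where $\Env_\eta[g](x)=\min_{u\in\Simp}\{g(u)+\eta^{-1}\Div(x,u)\}$ is the right Bregman--Moreau envelope. Joint convexity of KL gives convexity, and the three-point identity gives $\Env_\eta[g]\in\Fh(\eta^{-1})$. Continuity of $\prox_{\eta g}$ gives locality: if $\tilde g\ge g$ and $\tilde g=g$ near $\prox_{\eta g}(x_s)$, the envelopes coincide near $x_s$.

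Consistency then only needs $j_t$ to be the lightest unexplored coordinate of the \emph{current} query, which has mass at most $\veps_T=1/(8(T+1)^2)$ by pigeonhole. The entrywise bound $\prox_{g/2}(x)\le 2x$ keeps it below $2\veps_T$ at the prox point. Offset increments $\delta_T=(2+\rho)\veps_T$ then make every later atom strictly dominated near every earlier prox point, regardless of the mass later coordinates had at earlier queries. So your requirement of lightness under all queries, which only gives $O(1/T)$, is both unnecessary and too coarse.

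The gap then follows from $\Env_\eta[g](x)\ge g(x)-\eta M$, with $M$ bounding the active entries of the prox point, compared against the uniform point on $j_1,\dots,j_{T+1}$. The total offset $T\delta_T=O(1/T)$ is small enough to leave $\frac{2T+3}{8(T+1)^2}>\frac{1}{4(T+1)}$.
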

This result shows that, in contrast to~$\Vh(L)$, accelerated~$O(T^{-2})$ convergence cannot be attained on~$\Fh(L)$. 
Moreover, the convergence guarantee~\eqref{eq:intro-upper-KL} of entropic mirror descent with stepsize~$1/L$ is optimal---up to a logarithmic factor---if the dimension is large enough, namely when~$\smash{d =\Omega(T^2)}$.

Let us make several remarks regarding Theorem~\ref{th:intro-lower}. 
%
\begin{itemize}
\item[R1.] As mentioned, Theorem~\ref{th:intro-lower} leaves a logarithmic gap between the best available upper and lower bounds for the class~$\Fh(L)$; cf.~\eqref{eq:intro-upper-KL}. 
We expect that the lower bound in~\eqref{eq:intro-lower} is loose, and the missing~$\log d$ factor can be recovered.
A promising approach is outlined in Section~\ref{sec:outro}.

\item[R2.] When~$d = \exp(T)$, the right-hand side of~\eqref{eq:intro-lower} reads as~$O(\smash{{LT^{-2} \log d}})$, matching the upper bound in~\eqref{eq:intro-upper-KL-acc} and diverging from~\eqref{eq:intro-upper-KL} by~$T$.  Therefore, acceleration is formally not ruled out for such ``extremely high-dimensional'' problems; however, this regime is of limited practical interest anyway, since even a single iteration of a deterministic first-order method is prohibitive.

\item[R3.] The quadratic dependence of the dimension on~$T$  is crucial for our resisting oracle construction.
This gives an~$O(T^{-2})$ ``safety margin'' that ensures consistency of the transcript after~$T$ steps.

\item[R4.] Restriction to interior queries is natural: indeed,~$f$ might be non-differentiable on the relative boundary; this is not a pathological case either, as shown by the scaled negative entropy~$Lh$.
\vspace{-0.1cm}
\end{itemize}

Theorem~\ref{th:intro-lower} admits a noncommutative generalization, pertaining to functions  on the ``spectraplex''
\[
\Dens := \{X \in \Herm_+: \tr(X) = 1\}
\] 
where~$\Herm_+$ is the Hermitian positive-semidefinite cone. 
In this setting, defined rigorously in Section~\ref{sec:quantum}, 
we introduce the class~$\Qd(L)$ of functions on~$\Dens$ that are convex,~$C^1(\ri(\Dens))$, and~$L$-smooth relative to the negative von Neumann entropy~$\tr(X \log X)$, which is the spectral analog of~$h(x)$. 
It turns out that this noncommutative setting can be reduced to the diagonal case, corresponding to the class~$\Fh(L)$ and the setting of Theorem~\ref{th:lower}. 
This is done by embedding the hard instance of Theorem~\ref{th:intro-lower} diagonally, and using Lindblad's inequality~\cite{Lindblad1975} to argue that the composition~$F(\cdot) = f( \diag(\cdot))$ of~$f(\cdot) \in \Fh(L)$ and the diagonal extraction map belongs to~$\Qd(L)$. The final ingredient is the observation that the diagonal of the (matrix) transcript of an arbitrary first-order method run on~$\smash{F}$ emulates the transcript of some first-order method run on~$f$.
We defer further details to Section~\ref{sec:quantum}.

Finally, as a minor contribution, in Section~\ref{sec:outro} we find an explicit form of the pointwise minimal interpolant in the class~$\Fh(L)$ for given interpolation data~$\cI = (x_i,f_i,\xi_i)_{i = 1}^{N}$, defined as the function~$f \in \Fh(L)$ that interpolates~$\cI$ to first order, i.e.~satisfies~$f(x_i) = f_i$ and~$\nabla f(x_i) = \xi_i$ for all~$i \in [N]$, and is no larger than any other such function at every point~$x \in \Simp$. This result follows easily from the results in R.-A.~Dragomir's PhD thesis~\cite{DragomirThesis2021}; we record it due to its pivotal role in the promising approach of improving Theorem~\ref{th:intro-lower}. Further details are deferred to Section~\ref{sec:outro}.

\paragraph{Summary of the approach.}
Our construction is based on the {\em right Bregman--Moreau envelope}~\cite{BauschkeDaoLindstrom2018} applied to an incremental coordinatewise construction in the spirit of~Guzm\'an and Nemirovski~\cite{GuzmanNemirovski2015}.
The right Bregman-Moreau envelope replaces the inf-convolution smoothing operator of~\cite{GuzmanNemirovski2015} as the smoothing mechanism.
This smoothing mechanism heavily relies upon the $f$-divergence properties of the KL divergence, namely its joint and separate convexity in {both} arguments;
as demonstrated in~\cite{BauschkeBorwein2001}, among Bregman divergences, these properties occur only in
the Euclidean and KL cases.
Crucially, the right envelope of a convex function is convex and smooth with respect to the potential~(see~\cite{BauschkeDaoLindstrom2018});
meanwhile, the local smoothing of~\cite{GuzmanNemirovski2015} cannot be employed, as it would give an instance in~$\Vh(L)$, and the latter class {\em does} admit acceleration. 

\paragraph{Related work.}
%
%
In addition to~\cite{GuzmanNemirovski2015},  closely relevant to ours is the work of Dragomir, Taylor, d'Aspremont, and Bolte~\cite{DragomirTaylordAspremontBolte2022}, who showed the following: 
there {\em exists a pair}~$(f,\phi)$ in which~$\phi$ is strictly convex on the positive orthant~$\R^d_+$,~$f$ is convex and smooth relative to~$\phi$,
and any first-order method has worst-case convergence rate no better than~$\Omega(1/T)$. 
Their result is based on so-called performance estimation techniques (e.g.~\cite{DroriTeboulle2014},~\cite{TaylorHendrickxGlineur2017}), allowing one to find worst-case instances in infinite-dimensional functional classes by reducing the corresponding optimization problem to a (finite-dimensional) semidefinite program (SDP), dubbed ``performance estimation program'' (PEP).
Usually, the PEP methodology is limited to Euclidean geometry, since the SDP representation arises from the Gram matrix that juxtaposes the candidate iterates~$x_{k}$ and gradients~$g_{k} = \nabla f(x_{k})$ in the transcript; as a result, PEPs are poorly suited for dealing with non-Euclidean geometries, where the terms of the form~$\| x_k - x_l \|^2$ and~$\|g_k - g_l\|_*^2$ cannot be expressed linearly via dot products. 
The authors of~\cite{DragomirTaylordAspremontBolte2022} elegantly sidestepped this limitation by allowing the potential itself to vary with~$f$, so that the PEP constructs a worst-case {\em pair}~$(f,\phi)$. 
However, the resulting potential is quite pathological---as one might expect with its adversarial origin---and it may very well be that some {\em specific} pairs~$(X,\phi)$ do admit acceleration. 
While our result shows this is not the case for~$(\Simp, h)$, other highly structured settings remain open, most notably that of the logarithmic-barrier potential~$\phi(x) = -\sum_{i = 1}^d \log(x)_i$ on~$\R^d_+$.



Chapter~5 of Dragomir's PhD thesis~\cite{DragomirThesis2021} provides {exact}  first-order interpolation conditions for the class~$\smash{\Fh^+(L)}$ of functions on the nonnegative orthant~$\smash{\R^d_+}$ that are convex and smooth relative to the unnormalized entropy~$\smash{h_\e(x) := \sum_{k = 1}^d (x)_k \log (x)_k - (x)_k}$; such conditions are the inequalities imposed on the interpolation data~$\cI = \smash{(x_i,f_i,\xi_i)_{i = 1}^N}$, whose validity is equivalent to the existence of a function in~$\Fh^+(L)$ that interpolates~$\cI$. Conceptually, these conditions are counterparts of SDP-representable interpolation conditions in the Euclidean case, which are the crux of the PEP framework, and their existence crucially relies upon the joint convexity of KL divergence~(see \cite[Lem.~5.3.3 and Rem.~3]{DragomirThesis2021}). In Section~\ref{sec:outro}, we generalize these conditions for the class~$\Fh(L)$ of entropy-smooth functions on~$\Simp$, and then use them to derive the {pointwise minimal} interpolant.
 It appears that these results might be leveraged to 
recover the logarithmic factor missing in~\eqref{eq:intro-lower}. In Section~\ref{sec:outro} we further discuss this possibility in the light of the recent work of Florea and Nesterov~\cite{florea2025optimal}.

Finally, we mention the triangular scaling exponent framework of~\cite{HanzelyRichtarikXiao2021}, which seeks to relax the standard assumption of~$1$-strong convexity of~$\phi$ w.r.t.~a norm while retaining accelerated convergence. 
Our critique is that, while leading to locally adaptive algorithms that prove to be highly effective for optimization problems arising in some applications, the triangular scaling condition is hard to ensure in the worst case.
In particular, Theorem~\ref{th:intro-lower} shows that this condition is not satisfied for~$\Fh(L)$.

\paragraph{Roadmap.}
In Section~\ref{sec:prelim} we collect the ingredients for the proof of Theorem~\ref{th:intro-lower}.
To that end, we recall the properties of the right Bregman-Moreau envelope and the associated smoothing operator, formalize the oracle complexity framework, and describe the family of hard instances subsequently used in the proof.
In Section~\ref{sec:proof} we carry out the proof of Theorem~\ref{th:intro-lower}, and in Section~\ref{sec:quantum} we formulate and discuss its noncommutative generalization. 
In Section~\ref{sec:outro}, we derive exact interpolation conditions and the form of pointwise minimal interpolant in~$\Fh(L)$.

\paragraph{Notation.}
We denote~$[d] := \{1,2,\dots, d\}$. We let~$e_i$ be the~$i$th canonical basis vector in~$\R^d$ and use the concise notation~$(x)_i := \langle x, e_i \rangle$ for the~$i$th entry of~$x \in \R^d$. We let~$\ones$ be the all-ones vector in~$\R^d$. As previously mentioned, we write~$\bnabla f$ for the {tangent} gradient of~$f \in C^1(\ri(\Simp))$; in particular,~$\bnabla f(x) \in \spanof{\ones}^\perp$ for any~$x \in \ri(\Simp)$. 
Additional notation is introduced as necessary.


\section{Building blocks}
\label{sec:prelim}

\paragraph{Tangent gradients.}
In Sections~\ref{sec:prelim}--\ref{sec:proof}, we work with functions defined on~$\Simp$ and differentiable in~$\ri(\Simp)$; this includes every~$f \in \Fh(L)$ and, in particular, the hard instances presented in Section~\ref{sec:hard-subclass}. 
While such functions may arise as restrictions of~$C^1(\R_{++}^d)$-functions (as, for example,~$h$ itself), this is not required in general. For such a function~$f$, we might differentiate it at~$x \in \ri(\Simp)$ intrinsically over the affine hull of~$\Simp$, by identifying the affine subspace~$\spanof{\ones}^{\perp} + d^{-1} \ones$  with~$\R^{d-1}$, via an affine homeomorphism (with orthogonal linear transformation), taking the full gradient of the resulting function in~$\R^{d-1}$, and changing the basis to account for the transformation. 
We take this as the definition of the tangent gradient~$\bnabla f$. 
Note that~$\bnabla f$ always belongs to the tangent space~$\Tan := \spanof{\ones}^\perp$; moreover, if~$f$ is defined in a proper~$\R^d$-neighborhood of~$x$, then~$\bnabla f(x)$ coincides with the Euclidean projection onto~$\Tan$ of the full gradient; denoting the latter with~$\nabla_\star f$, 
\begin{equation}
\label{eq:grad-tangent}
\bnabla f(x) := \nabla_\star f(x) - d^{-1}  \langle \nabla_\star f(x), \ones\rangle \ones, \qquad x \in \ri(\Simp).
\end{equation}
In fact, even if~$f$ is defined only over~$\Simp$, this formula remains valid if we extend~$f$ to~$\R^d_+$ with differentiability over~$\R^d_{++}$. Such an extension is not unique, and~$\nabla_* f$ in general depends on the chosen extension; however,  the dependence is only in the normal component~$d^{-1}\langle \nabla_\star f(x), \ones \rangle \ones$, whereas~$\bnabla f(x)$ is an invariant depending only on the values of~$f$ in a~$\Tan$-neighborhood of~$x \in \ri(\Simp)$.


\subsection{Smoothing with right Bregman--Moreau envelope}
\label{sec:envelope}

Let $g: \Simp \to \mathbb{R}$ be convex and continuous.
Adapting the standard definition to our setting, {\em the right Bregman--Moreau envelope} of~$g$ with parameter~$\eta > 0$ is defined by its values on the simplex:
\begin{equation}
\label{eq:envelope}
\Env_\eta[g](x) := \min_{u \in \Simp}
\left\{g(u)+\eta^{-1} \Dh(x,u)\right\},
\qquad x \in \Simp.
\end{equation}
For~$x \in \ri(\Simp)$, the minimizer is unique and attained on~$\ri(\Simp)$; 
this defines~$\prox_{\eta g}: \ri(\Simp) \to \ri(\Simp)$,
\begin{equation}
\label{eq:prox}
\prox_{\eta g}(x) = \argmin_{u \in \Simp} \left\{ \eta g(u) + \Div(x,u) \right\}, \qquad x \in \ri(\Simp),
\end{equation}
called {\em the prox-mapping} of $\eta g(\cdot)$.
%

Next, we collect some properties of the right Bregman--Moreau envelope and proximal mapping.

\begin{lemma}
\label{lem:prox-mapping}
For~$\eta > 0$, the prox-mapping~$\prox_{\eta g}(\cdot)$ is well-defined and continuous on~$\ri(\Simp)$.
\end{lemma}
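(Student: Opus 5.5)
Fix $x \in \ri(\Simp)$ and write $\Phi_x(u) := \eta g(u) + \Div(x,u)$ for $u \in \Simp$. The plan is to show in turn that $\Phi_x$ has a minimizer, that every minimizer lies in $\ri(\Simp)$, that it is unique, and finally that it depends continuously on $x$.

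\textbf{Existence.} Since $x$ has full support, $\Div(x,u) = \sum_k (x)_k \log (x)_k - \sum_k (x)_k \log (u)_k$. This is finite exactly when $u \in \ri(\Simp)$ and equals $+\infty$ on the relative boundary. As $u$ approaches the boundary it tends to $+\infty$, because some $(u)_k \to 0$ while $(x)_k > 0$. Hence $\Phi_x$ is lower semicontinuous on the compact set $\Simp$, with values in $\R \cup \{+\infty\}$; here we use that $g$ is continuous, hence bounded. It is also finite somewhere, for instance at $u = x$. So a minimizer exists, and it must lie in $\ri(\Simp)$.

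\textbf{Interiority and uniqueness.} The second claim can be made quantitative. Write $m := \min_k (x)_k > 0$ and $M := \max_{\Simp} g - \min_{\Simp} g$. Any minimizer $u$ satisfies $\Phi_x(u) \le \Phi_x(x) = \eta g(x)$, which gives $\Div(x,u) \le \eta M$. Since each term $(x)_j \log((x)_j/(u)_j)$ is bounded below by $(x)_j \log (x)_j \ge -1/\e$, we obtain
\[
-m \log (u)_k \le \eta M + \sum_j (x)_j \log(1/(x)_j) + \text{const}.
\]
This yields a lower bound $(u)_k \ge \delta > 0$ that is uniform over $x$ in any compact subset $K \subset \ri(\Simp)$. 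For uniqueness, note that $u \mapsto -\sum_k (x)_k \log (u)_k$ is strictly convex on $\ri(\Simp)$ because all weights $(x)_k$ are positive, and $g$ is convex. So $\Phi_x$ is strictly convex and the minimizer is unique. This shows $\prox_{\eta g}$ is well-defined.

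\textbf{Continuity.} This is the step requiring the most care. Let $x_n \to x$ in $\ri(\Simp)$ and set $u_n := \prox_{\eta g}(x_n)$. By the uniform bound just established, with $K$ a compact neighborhood of $x$, all $u_n$ lie in the compact set $\Simp_\delta := \{u \in \Simp : \min_k (u)_k \ge \delta\}$. On $K \times \Simp_\delta$ the function $(x,u) \mapsto \eta g(u) + \Div(x,u)$ is jointly continuous. Take any subsequence with $u_n \to \bar u$. For every $v \in \ri(\Simp)$ we have $\Phi_{x_n}(u_n) \le \Phi_{x_n}(v)$, and passing to the limit using joint continuity gives $\Phi_x(\bar u) \le \Phi_x(v)$. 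Thus $\bar u$ minimizes $\Phi_x$, so $\bar u = \prox_{\eta g}(x)$ by uniqueness. Since every subsequential limit is the same point and the sequence lies in a compact set, $u_n \to \prox_{\eta g}(x)$.

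The only genuinely delicate point is the uniform-in-$x$ interiority bound. Without it, minimizers could escape to the boundary, where $\Div$ is discontinuous, and the limiting argument would fail.
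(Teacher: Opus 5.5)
Your proof is correct. For existence, interiority and uniqueness it matches the paper: boundedness of $g$, blow-up of $\Div(x,\cdot)$ at the relative boundary, and strict convexity from the positive weights $(x)_k$. For continuity, however, you take a different and more self-contained route.

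The paper simply cites the Rockafellar--Wets parametric-minimization result (Example~5.22 of their book) for $\Phi(u,x)=g(u)+\eta^{-1}\Div(x,u)+\delta_{\Simp}(u)$. This function is proper, lower semicontinuous, and level-bounded in $u$ locally uniformly in $x$ (by compactness of $\Simp$). Moreover, $\Phi(\prox_{\eta g}(\bar x),\cdot)$ is continuous at $\bar x$. Together these give outer semicontinuity of the single-valued argmin map, i.e.\ continuity.

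You instead derive an explicit interiority bound of the form $\min_k (u)_k \ge \exp(-C/\min_k (x)_k)$, uniform for $x$ in a compact $K\subset\ri(\Simp)$. You then run a sequential-compactness argument on $K$ times the truncated simplex $\{u\in\Simp:\min_k (u)_k\ge\delta\}$, where everything is continuous. Your version is elementary and yields a reusable quantitative bound; the paper's is shorter.

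Your closing remark overstates the need for the uniform bound, though. The extended KL divergence is jointly lower semicontinuous on $\Simp\times\Simp$. So for any subsequential limit $\bar u$ one has, in your notation, $\Phi_x(\bar u)\le\liminf_n\Phi_{x_n}(u_n)\le\lim_n\Phi_{x_n}(v)=\Phi_x(v)$ for every $v\in\ri(\Simp)$. This already forces $\bar u\in\ri(\Simp)$ and $\bar u=\prox_{\eta g}(x)$, and it is precisely what the cited theorem exploits.

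A minor point: for the right-hand side $\Phi_{x_n}(v)$ you need continuity of $x\mapsto\Div(x,v)$ at fixed $v\in\ri(\Simp)$. This is immediate, but it is not literally covered by your joint-continuity set unless $v$ lies in it.
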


\begin{proof}
As a continuous function on~$\Simp$,~$g$ is bounded from below.
From this and the expression~\eqref{eq:intro-kl} for KL divergence, we see that for~$x \in \ri(\Simp)$, minimization in~\eqref{eq:envelope} can be restricted to~$\ri(\Simp)$.
The same expression shows that the Hessian of~$\Div(x,\cdot)$ is diagonal with positive entries over~$\ri(\Simp)$, so the objective in~\eqref{eq:envelope} is strictly convex and the minimizer is unique.\footnote{Note that~$\Div(x,\cdot)$ is infinitely differentiable as a function over~$\R^d_{++}$, hence we may consider its full Hessian here.}
Continuity of~$\prox_{\eta g}(\cdot)$ follows from the optimal-set mapping theorem of Rockafellar and Wets (see~\cite[Example~5.22]{rockafellar1998variational}), applied to
\[
        \Phi(u,x)
        :=
        g(u)+\eta^{-1}\Div(x,u)+\delta_{\Simp}(u),
        \qquad x\in\ri(\Simp).
\]
Since~$\Simp$ is compact, the function $\Phi$ is proper, lower semicontinuous, and level-bounded in $u$ locally uniformly in $x$.
Since $\Phi(\prox_{\eta g}(\bar x),x)$ is continuous in $x$ at $\bar x$, the cited theorem implies continuity of the value function at any~$\bar x\in\ri(\Simp)$, and outer continuity of the corresponding minimizing set-mapping.
Since this mapping outputs a singleton, this is the continuity of~$x \mapsto \prox_{\eta g}(x)$.
\end{proof}

\begin{lemma}
\label{lem:envelope-regularity}
The function~$\Env_\eta[g](\cdot)$ is convex on~$\Simp$ and continuously differentiable on~$\ri(\Simp)$, with
\begin{equation}
\label{eq:right-bm-prox-identity}
    \bnabla \Env_\eta[g](x)
    =
    -\eta^{-1}(\log\prox_{\eta g}(x) - \log x),
\end{equation}
where~$\log(\cdot)$ is applied entrywise. 
Moreover,~$\Env_\eta[g](\cdot)$ satisfies the inequality for $x \in \ri(\Simp)$ and~$y \in \Simp$:
\begin{equation}
\label{eq:right-bm-relative-smoothness}
    \Env_\eta[g](y)
    \le
    \Env_\eta[g](x)
    +\left\langle \nabla \Env_\eta[g](x),y-x\right\rangle
    +\eta^{-1}\Div(y,x).
\end{equation}
In particular,~$\Env_\eta[g](\cdot) \in \Ent(\eta^{-1})$. 
\end{lemma}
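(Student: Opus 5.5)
We will rely on the joint convexity of the KL divergence and a ``sandwich'' argument built around the prox-point. Throughout, fix $x\in\ri(\Simp)$ and write $u_x:=\prox_{\eta g}(x)$; by Lemma~\ref{lem:prox-mapping} this point exists, lies in $\ri(\Simp)$, and depends continuously on $x$.

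\emph{Convexity.} The map $(x,u)\mapsto g(u)+\eta^{-1}\Div(x,u)$ is jointly convex on $\Simp\times\Simp$, with values in the extended reals. Indeed, $g$ is convex, and $\Div$ is jointly convex as an $f$-divergence: the perspective $(a,b)\mapsto a\log(a/b)$ is jointly convex, and the extended-value convention in~\eqref{eq:intro-kl} preserves this. Partial minimization of a jointly convex function over $u$ in the convex set $\Simp$ yields a convex function of $x$, so $\Env_\eta[g]$ is convex. Finiteness holds everywhere on $\Simp$: choosing $u=d^{-1}\ones$ gives $\Env_\eta[g]\le \max g+\eta^{-1}\log d$.

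\emph{Upper bound~\eqref{eq:right-bm-relative-smoothness} and the gradient.} Let $y\in\Simp$. Plug the same center $u_x$ into the definition of the envelope at $y$:
\[
\Env_\eta[g](y)\le g(u_x)+\eta^{-1}\Div(y,u_x)=\Env_\eta[g](x)+\eta^{-1}\big(\Div(y,u_x)-\Div(x,u_x)\big).
\]
A direct expansion gives the three-point identity
\[
\Div(y,u)-\Div(x,u)=\Div(y,x)+\langle \log x-\log u,\,y-x\rangle .
\]
It holds because the $y\log y$ and $x\log x$ terms recombine into $\Div(y,x)$, and the remaining linear terms collect as shown; the normalization $\langle \ones,y-x\rangle=0$ is used here. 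Setting $\xi(x):=-\eta^{-1}(\log u_x-\log x)$ and projecting onto $\Tan$ (which is harmless, since $y-x\in\Tan$), we obtain
\[
\Env_\eta[g](y)\le \Env_\eta[g](x)+\langle \xi(x),y-x\rangle+\eta^{-1}\Div(y,x).
\]
This is~\eqref{eq:right-bm-relative-smoothness}, once we identify $\xi(x)$ with the gradient.

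\emph{Differentiability and the gradient formula.} This step is the crux. The displayed inequality gives a quadratic upper model at $x$: the right-hand side exceeds the linear part by only $\eta^{-1}\Div(y,x)=O(\|y-x\|^2)$ as $y\to x$ within $\ri(\Simp)$. Convexity supplies, for any subgradient $s\in\partial\Env_\eta[g](x)$ taken on the affine hull, the lower model $\Env_\eta[g](y)\ge \Env_\eta[g](x)+\langle s,y-x\rangle$. Subtracting the two models gives
\[
\langle s-\xi(x),\,y-x\rangle\le \eta^{-1}\Div(y,x)=O(\|y-x\|^2)
\]
for all $y$ near $x$ in the tangent directions. Taking $y=x+tv$ with $v\in\Tan$ and letting $t\to0^+$ yields $\langle s-\xi(x),v\rangle\le 0$ for every $v\in\Tan$, hence $s=\xi(x)$ as elements of $\Tan$. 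So the subdifferential is a singleton, which means $\Env_\eta[g]$ is differentiable at $x$ with $\bnabla\Env_\eta[g](x)=\xi(x)$; this is~\eqref{eq:right-bm-prox-identity}, up to the normal component, which the tangent projection removes. Continuity of this gradient follows from continuity of $x\mapsto u_x$ (Lemma~\ref{lem:prox-mapping}) together with continuity of $\log$ on $\R^d_{++}$.

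\emph{Membership in the class.} Combining the results above: $\Env_\eta[g]$ is convex, finite and continuous on $\Simp$, and $C^1$ on $\ri(\Simp)$. Inequality~\eqref{eq:right-bm-relative-smoothness} gives the upper half of~\eqref{eq:intro-sandwich} with $L=\eta^{-1}$, and convexity gives the lower half. Therefore $\Env_\eta[g]\in\Fh(\eta^{-1})$.

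The main obstacle is being careful with the tangent versus full gradient, and with the boundary, where $\Div(y,x)$ is finite for every $y\in\Simp$ because $x$ is interior. One further point needs justification: continuity of $\Env_\eta[g]$ up to the boundary. I would obtain it from convexity and finiteness together with lower semicontinuity of $\Env_\eta[g]$, which holds since it is a partial minimum of a lower semicontinuous function over the compact set $\Simp$. On a polytope, a convex lower semicontinuous finite function is continuous (Gale--Klee--Rockafellar).
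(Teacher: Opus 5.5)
Your proof is correct. It differs from the paper's in how you get differentiability and the gradient formula.

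The paper first obtains \eqref{eq:right-bm-prox-identity} from Danskin's theorem and the optimality condition. It then substitutes it into the three-point computation to get \eqref{eq:right-bm-relative-smoothness}.

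You reverse the order. First you derive the upper model with $\xi(x)$, using the same feasible-point trick and three-point identity as the paper. Then you identify $\xi(x)$ as the gradient by sandwiching $\Env_\eta[g]$ between two bounds: a relative subgradient lower bound, which exists because $x\in\ri(\Simp)$, and the upper bound, whose excess $\eta^{-1}\Div(y,x)$ is $O(\|y-x\|^2)$ at interior $x$. In fact, once the subgradient is identified, the sandwich gives $|\Env_\eta[g](y)-\Env_\eta[g](x)-\langle \ProjTan\xi(x),y-x\rangle|\le\eta^{-1}\Div(y,x)$ directly. That is Fr\'echet differentiability on the affine hull, so you do not need the singleton-subdifferential theorem.

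What your route buys:
\begin{itemize}
\item It avoids verifying Danskin's hypotheses. These are not automatic here, because $\Div(x,\cdot)=+\infty$ on the relative boundary. One would have to localize the minimization to a compact subset of $\ri(\Simp)$ using continuity of the prox.
\item It makes explicit that \eqref{eq:right-bm-prox-identity} holds only after projection onto $\Tan$. In general $\log x-\log\prox_{\eta g}(x)$ has a nonzero component along $\ones$, and the paper leaves this implicit.
\item It proves continuity of $\Env_\eta[g]$ on the closed simplex, which the paper only asserts for the class.
\end{itemize}

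The paper's route is shorter and yields the gradient pointwise without a convexity-based argument. Both routes rely on Lemma~\ref{lem:prox-mapping} for continuity of the gradient.

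A minor point: the three-point identity $\Div(y,u)-\Div(x,u)=\Div(y,x)+\langle\log x-\log u,\,y-x\rangle$ holds by direct expansion without using $\langle\ones,y-x\rangle=0$.
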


\begin{proof}
By the perspective rule applied coordinatewise to~$x \mapsto x \log x$, extended-value KL divergence~\eqref{eq:intro-kl} is jointly convex, whence convexity of $\Env_\eta[g]$ follows by the partial minimization rule. 
For~\eqref{eq:right-bm-prox-identity}, apply Danskin's theorem and the first-order optimality condition to~\eqref{eq:envelope} with~$x \in \ri(\Simp)$.
Finally, invoking~\eqref{eq:envelope} at~$x$ and~$y$, with~$u := \prox_{\eta g}(x)$ as a feasible point in the latter case, gives
\begin{equation}
\begin{aligned}
\label{eq:right-bm-one-step}
    \eta \Env_\eta[g](y)- \eta \Env_\eta[g](x)
    \,\le\,
    \Div(y,u)-\Div(x,u)
    \,=\,
    \Div(y,x)+ \left\langle \log x-\log u,y-x\right\rangle.
\end{aligned}
\end{equation}
We used the three-point identity in the final step.
Combining this with~\eqref{eq:right-bm-prox-identity} and~\eqref{eq:grad-tangent} gives~\eqref{eq:right-bm-relative-smoothness}.
\end{proof}

Next, we establish {\em locality} of the smoothing mechanism based upon the right Bregman--Moreau envelope, ensuring that the envelopes of locally coinciding functions (locally) agree to first order.

\begin{lemma}
\label{lem:right-bm-locality}
Let~$g,\tilde g:\Simp\to\mathbb{R}$ be finite, continuous, convex, and such that~$\tilde g\ge g$ on~$\Simp$. 
Fix arbitrary~$\hat x\in\ri(\Simp)$ and let~$\hat u:=\prox_{\eta g}(\hat x)$. 
If $\tilde g=g$ in a relative neighborhood of $\hat u$, then 
\begin{equation}
\label{eq:right-bm-locality-equality}
    \Env_\eta[\tilde g](x)=\Env_\eta[g](x)
    \qquad \forall x \in \mathcal{X}
\end{equation}
in some relative neighborhood $\mathcal{X}$ of $\hat x$. 
In particular,~$\bnabla\Env_\eta[\tilde g](\hat x)= \bnabla \Env_\eta[g](\hat x)$.
\end{lemma}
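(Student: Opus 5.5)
The plan is to show that, for $x$ near $\hat x$, the prox point $\prox_{\eta g}(x)$ stays inside the neighborhood where $\tilde g = g$, and then to sandwich $\Env_\eta[\tilde g]$ between two copies of $\Env_\eta[g]$.

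Let $\mathcal{U}$ be a relative neighborhood of $\hat u$ in $\Simp$ on which $\tilde g = g$. By Lemma~\ref{lem:prox-mapping}, $\prox_{\eta g}$ is continuous on $\ri(\Simp)$. We may therefore pick a relative neighborhood $\mathcal{X}\subset\ri(\Simp)$ of $\hat x$ such that $u_x := \prox_{\eta g}(x) \in \mathcal{U}$ for every $x \in \mathcal{X}$.

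Now fix $x \in \mathcal{X}$. The lower bound is immediate: since $\tilde g \ge g$ pointwise, the objective in~\eqref{eq:envelope} for $\tilde g$ dominates the one for $g$, so $\Env_\eta[\tilde g](x) \ge \Env_\eta[g](x)$. For the upper bound, plug the feasible point $u_x$ into the definition of $\Env_\eta[\tilde g](x)$. Because $u_x \in \mathcal{U}$, we have $\tilde g(u_x) = g(u_x)$, and hence
\[
\Env_\eta[\tilde g](x) \le \tilde g(u_x) + \eta^{-1}\Div(x,u_x) = g(u_x) + \eta^{-1}\Div(x,u_x) = \Env_\eta[g](x).
\]
The two bounds give~\eqref{eq:right-bm-locality-equality} on $\mathcal{X}$.

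For the gradient claim, note that both envelopes are $C^1$ on $\ri(\Simp)$ by Lemma~\ref{lem:envelope-regularity}; this applies to $\tilde g$ as well, since it is finite, continuous and convex. The two envelopes coincide on a relative neighborhood of $\hat x$, which is in particular a $\Tan$-neighborhood. Since the tangent gradient depends only on the values in such a neighborhood, the tangent gradients agree at $\hat x$.

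There is no real obstacle here. The only point needing care is that the continuity of the prox-mapping is exactly what places the prox points of nearby $x$ inside $\mathcal{U}$. That continuity is supplied by Lemma~\ref{lem:prox-mapping}, so no uniform estimate is required.
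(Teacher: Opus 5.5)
Your proof is correct and follows essentially the same route as the paper. Continuity of $\prox_{\eta g}$ (Lemma~\ref{lem:prox-mapping}) keeps the prox points of nearby $x$ inside the coincidence neighborhood $\mathcal{U}$; then $\Env_\eta[\tilde g]$ is bounded below by $\Env_\eta[g]$ because $\tilde g \ge g$, and bounded above by it by plugging in the feasible point $\prox_{\eta g}(x)$. Your explicit remark that Lemma~\ref{lem:envelope-regularity} also applies to $\tilde g$, so both envelopes are differentiable at $\hat x$, is a small but welcome clarification of the final step.
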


\begin{proof}
Let~$\mathcal{U}$ be a relative neighborhood of~$\hat u$ where~$\tilde g=g$. 
By continuity of~$\prox_{\eta g}(\cdot)$, see Lemma~\ref{lem:prox-mapping}, in some relative neighborhood $\mathcal{X}$ of $\hat x$ one has
\[
\prox_{\eta g}(x) \in \mathcal{U} \qquad \forall x \in \mathcal{X}.
\]  
Now, fix arbitrary~$x \in \mathcal{X}$ and put~$u := \prox_{\eta g}(x)$. 
Since $\tilde g \ge g$ everywhere on~$\Simp$, by~\eqref{eq:envelope} we get
\begin{equation*}
    \Env_\eta[\tilde g](x)\ge \Env_\eta[g](x).
\end{equation*}
On the other hand, by invoking~\eqref{eq:envelope} for~$\tilde g$ with~$u$ as a feasible point (note that~$u \in \mathcal{U}$), we get
\begin{equation*}
\begin{aligned}
    \Env_\eta[\tilde g](x)
    &\le
    \tilde g(u)+\eta^{-1}\Div(x,u)
    \\
    &=
    g(u)+\eta^{-1}\Div(x,u)
    =
    \Env_\eta[g](x).
\end{aligned}
\end{equation*}
Thus~$\Env_{\eta} [\tilde g] = \Env_{\eta} [g]$ on~$\mathcal{X}$, and~$\bnabla\Env_\eta[\tilde g](\hat x)= \bnabla \Env_\eta[g](\hat x)$ follows from differentiability on $\ri(\Simp)$.
\end{proof}

\subsection{Oracle complexity model and formal statement of the result}
\label{sec:formal}

We work in the oracle complexity model of~\cite{nemirovski1983problem}, adapted to account for the lack of differentiability on the boundary for objectives in~$\Fh(L)$. 
Let us give a brief yet rigorous summary of this model.

Any~$f \in \Fh(L)$ specifies a {\em first-order oracle}~$\Orc_f$ which, when queried at any~$x \in \ri(\Simp)$, returns 
\begin{equation}\label{eq:oracle}
\Orc_f(x) := (f(x), \bnabla f(x))
\end{equation}
where~$\bnabla f$ is the tangent gradient of~$f$. 
As we explained in the beginning of Section~\ref{sec:prelim}, the use of~$\bnabla f$ is without loss of generality, since~$f$ is only defined over~$\Simp$ whose affine hull is parallel to~$\Tan$. 
However, one could still ask if anything could be gained by allowing the objective to be defined on the whole orthant~$\R^d_{+}$, with access to the full gradient oracle, while only requiring the relative smoothness condition in~\eqref{eq:intro-sandwich} to hold on~$\Simp$.
To address this, in Appendix~\ref{app:conic} we show that~\eqref{eq:intro-lower} remains valid for minimization in the class of~$1$-homogeneous extensions~\cite{ovcharov2018proper} of functions in~$ \Fh(L)$.



A deterministic {\em $T$-step first-order method} (FOM) makes~$T$ sequential queries~$x_1, \dots, x_{T} \in \ri(\Simp)$ of such an~$\Orc_f$, and outputs~$x_{T+1} \in \Simp$. 
Thus, a~$T$-step FOM is specified by a collection of mappings
\begin{equation}
\label{eq:method-maps}
\begin{aligned}
\Phi_t: \Omega^{t-1}  &\to \ri(\Simp), \qquad t \in [T]; \\
\bar \Phi_{T+1}: \Omega^T &\to \Simp,
\end{aligned}
\end{equation}
where~$\Omega = \R \times \Tan$, and~$\Omega^0$ is a singleton (so~$\Phi_1$ merely selects a specific point~$x_1 \in \ri(\Simp)$).
We let~$\FOM_d(T)$ be the class of all~$T$-step FOMs, as per~\eqref{eq:method-maps}. 
When a given method~$\Mtd \in \FOM_d(T)$ is instantiated with an oracle~$\Orc_f$ associated to a specific instance~$f \in \Ent(L)$, the resulting sequence
\begin{equation}
\label{eq:method-iterates}
x_1^{\Mtd} = \Phi_1^{\vphantom\Mtd}, 
\;\;
x_2^{\Mtd}(f) = \Phi_2^{\vphantom\Mtd}(\Orc_f(x_1^{\Mtd})), 
\;\; \ldots, \;\; 
x_{T+1}^{\Mtd}(f) = \bar\Phi_{T+1}^{\vphantom\Mtd}(\,\Orc_f(x_1^\Mtd),\; \dots,\; \Orc_f(x_{T}^\Mtd(f))),
\end{equation}
along with the responses~$\Orc_f(x_1^\Mtd),\Orc_f(x_2^\Mtd(f)),\dots,\Orc_f(x_{T}^\Mtd(f))$, is called {\em the transcript} of~$\Mtd$ on~$f$.
Note that~$\Mtd$ selects each~$x_t$ {before} receiving the oracle response~$\Orc_f(x_t)$, and the crucial property of a transcript is its {\em  consistency}: the oracle responses in~\eqref{eq:method-iterates} correspond to {the same}~$f \in \Fh(L)$.

To quantify the hardness of first-order optimization over~$\Fh(L)$, we define its {\em minimax~$T$-risk}
\begin{equation}
\label{eq:minimax-risk}
\Risk_{d}(T,L) \; 
:=
\inf_{\Mtd^{\vphantom{2^L}} \,\in\, \FOM_{d}(T)} 
\quad
\sup_{f \,\in\, \Fh(L)} 
\quad
\left\{ f(x_{T+1}^\Mtd(f))- \min_{x \in \Simp} f(x) \right\}.
\end{equation}
With this definition at hand, we are now in the position to give the rigorous statement of Theorem~\ref{th:intro-lower}.

\begin{theorem}
\label{th:lower}
For all~$L > 0$,~$T \ge 1$ and~$d \ge 8(T+1)^2 + T$, the minimax~$T$-risk of~$\Fh(L)$ satisfies
\begin{equation}
\label{eq:lower}
\Risk_{d}(T,L) > \frac{L}{4(T+1)}. 
\end{equation}
\end{theorem}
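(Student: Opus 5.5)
The plan is to adapt the Guzmán--Nemirovski resisting-oracle construction, replacing their inf-convolution smoothing with the right Bregman--Moreau envelope $\Env_\eta$ with $\eta = 1/L$. By Lemma~\ref{lem:envelope-regularity}, every instance of the form $f = \Env_{1/L}[g]$ with $g$ convex and continuous on $\Simp$ lies in $\Fh(L)$, so smoothness comes for free and all the work is in the nonsmooth "skeleton" $g$. I take $g$ to be a max of linear functions,
\[
g_{\sigma,t}(u) = \max\Bigl\{ \max_{s\le t} \bigl(\sigma_s (u)_{i_s} - s\gamma\bigr),\; -c \Bigr\},
\]
or a close variant with piecewise-linear pieces $\langle \sigma_s e_{i_s}, u\rangle$ over coordinates $i_1,\dots,i_t$. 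The coordinates and signs are chosen adaptively, and the shifts $\gamma$ are of order $1/T$, so that the pieces get activated one at a time. Relative to the full instance $g_T$, each partial $g_t \le g_T$ coincides with it in a neighborhood of $\prox_{\eta g_t}(x_s)$, $s\le t$.

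\textbf{Resisting oracle.} At step $t$, given the query $x_t\in\ri(\Simp)$, I choose a fresh coordinate $i_t$ among those not yet used on which the current and past prox-points $u_s := \prox_{\eta g_{t-1}}(x_s)$ put small mass. With $d\ge 8(T+1)^2+T$ coordinates and at most $T+1$ points of total mass $T+1$, averaging yields a coordinate where every $u_s$ has mass at most about $1/(8(T+1))$. This is exactly where the $O(T^{-2})$ safety margin of remark R3 enters.

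\textbf{Consistency.} I then show that the newly added piece $\sigma_t(u)_{i_t}-t\gamma$, and all subsequent pieces, stay strictly below the active max near each $u_s$. Lemma~\ref{lem:right-bm-locality}, applied with $g=g_{s}$ and $\tilde g = g_T$, then gives $\Orc_{\Env[g_T]}(x_s)=\Orc_{\Env[g_s]}(x_s)$. Hence the transcript of any $\Mtd\in\FOM_d(T)$ on $f=\Env_{1/L}[g_T]$ equals the adversarially built one, and $x_{T+1}$ is determined before $i_{T+1}$ is chosen.

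\textbf{Gap.} Choose $i_{T+1}$ so that $(x_{T+1})_{i_{T+1}}$ is small, say $\le 1/(8(T+1))$. Bound $f(x_{T+1})\ge g(\prox(x_{T+1}))$ from below using $f(x)\ge \min_u \{g(u)+L\Div(x,u)\}$ together with Pinsker, which makes moving mass onto the selected coordinates costly. Bound $\min f\le f(\bar u)=g(\bar u)$, where $\bar u$ spreads mass over $e_{i_1},\dots,e_{i_{T+1}}$ with $\sigma=-1$, so that $g(\bar u)\approx -1/(T+1)$. Tuning $\gamma$ and $c$ then yields a gap exceeding $L/(4(T+1))$.

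\textbf{Main obstacle.} The hardest step is the consistency/locality argument. The prox-points move when pieces are added, so I need quantitative control, via the optimality condition~\eqref{eq:right-bm-prox-identity}, of the form $u_s\propto x_s\odot\exp(-\eta\,\partial g)$. This must show that the mass of $u_s$ on unused coordinates stays $O(1/T)$, so that later pieces are strictly inactive there. I would first try the explicit multiplicative form of the prox for max-of-linear $g$ to verify this.
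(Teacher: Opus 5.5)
Your architecture matches the paper's: the right Bregman--Moreau envelope of a maximum of coordinate atoms, fresh coordinates chosen adaptively, Lemma~\ref{lem:right-bm-locality} for consistency, and a comparator uniform on the chosen coordinates. But the quantitative core does not close, and the step you flag as the main obstacle is left open. With offsets $s\gamma$, your only lower bound at the reported point is $g(x_{T+1})\ge -(x_{T+1})_{i_{T+1}}-(T+1)\gamma$. This must stay well above $g(\bar u)\approx -1/(T+1)$, so you need $\gamma=O(T^{-2})$, not order $1/T$. The natural way to make later atoms invisible near $\hat u_s=\prox_{\eta g_s}(x_s)$ is to keep the mass of $\hat u_s$ on its own coordinate $i_s$ below $\gamma$, so that mass must also be $O(T^{-2})$. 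Your selection rule averages over $T+1$ points of total mass $T+1$, which only gives $1/(8(T+1))$, a factor $T$ too weak. It also controls $\prox_{\eta g_{t-1}}(x_t)$ rather than $\prox_{\eta g_t}(x_t)$, which is the point that matters once the new atom is added.

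The missing observation is that past prox-points need no control at all. For atoms $-(u)_{j_t}-(t-1)\delta_T$ you have $a_t(u)\le -(t-1)\delta_T$ at every $u\in\Simp$, while $g_s(\hat u_s)\ge -(\hat u_s)_{j_s}-(s-1)\delta_T$. So it suffices to take $j_t$ as the lightest fresh coordinate of the single query $x_t$. Since $d-T\ge 8(T+1)^2$, that coordinate has mass at most $1/(8(T+1)^2)$. You then transfer this bound to $\prox_{\eta g_t}(x_t)$ via Lemma~\ref{lem:entry-growth}, which requires $\eta\le \tfrac12$. With this, your ``main obstacle'' disappears: Lemma~\ref{lem:right-bm-locality} applies directly, and you never need to track how prox-points move.

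The gap step also fails as proposed. Pinsker's inequality plus the $\ell_\infty$-Lipschitzness of $g$ give only $g(u)+\eta^{-1}\Div(x,u)\ge g(x)-\tfrac{r}{2}+\tfrac{r^2}{2\eta}\ge g(x)-\tfrac{\eta}{8}$, where $r=\|x-u\|_1$. That is a loss of absolute order $\eta$, which swamps the $1/(T+1)$ gap once $T$ is moderately large. What you need is the finer Lemma~\ref{lem:loss}. From the prox optimality condition, $|(x)_i-(\hat u)_i|=\eta|\hat m_\lambda-\lambda_i|(\hat u)_i\le \eta M$, where $M$ bounds the mass of $\hat u_{T+1}$ on the active coordinates. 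Because every active atom dominates the last one, $M\le 2\veps_T+T\delta_T=O(1/T)$.

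Your normalization is also off. Since $\min g\le \Env_{1/L}[g]\le g$, a $g$ independent of $L$ gives a gap that cannot grow with $L$. Rescaling $g$ by $L$ gives $\Env_{1/L}[Lg]=L\,\Env_{1}[g]$, i.e.\ effective $\eta=1$, exactly where the factor $\eta/(1-\eta)$ in Lemma~\ref{lem:entry-growth} blows up. The paper instead reduces to $L=1$ by homogeneity and uses $f=\tfrac12\Env_{1/2}[g]\in\Fh(1)$, paying a factor $2$ in the gap to keep $\eta=\tfrac12$. Finally, the instance must be $g_{T+1}$, including the atom chosen after $x_{T+1}$, and consistency must be checked against it rather than against $g_T$.
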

To prove Theorem~\ref{th:lower}, we shall proceed via the {``resisting oracle''} approach: 
interacting with arbitrary~$\Mtd \in \FOM_d(T)$, we shall construct a sequence~$\omega_1, \dots, \omega_T \in \Omega$, with
$
\omega_{t} = \omega_t(x_1, \dots, x_t),
$
which, along with the associated sequence of queries
$
\smash{
x_1 = \Phi_1,\;
x_2 = \Phi_2(\omega_1),
\ldots,
x_{T+1} = \bar\Phi_{T+1}(\omega_1,\dots,\omega_{T}), 
}
$
corresponds to a consistent transcript, i.e.,~matches some~$f \in \Fh(L)$ in the sense that~$\omega_t = \Orc_f(x_t^\Mtd)$. 
Our hard instance will be a maximum of affine functions, smoothed via the right Bregman-Moreau envelope to put it in~$\Fh(L)$.
In what follows, we first describe the general family of such nonsmooth functions and study their envelopes, then present the adaptive construction and complete the proof.

Before we proceed, a simple remark is in order. The minimax~$T$-risk is~$1$-homogeneous in~$L$, i.e.
\begin{equation}
\label{eq:risk-homogeneity}
\Risk_d(T,L) = L\Risk_d(T,1),
\end{equation}
as seen by noting that~$f \in \Fh(1)$ implies~$Lf \in \Fh(L)$, and that passing from~$f$ to~$Lf$ does not influence consistency of a transcript.
Since the right-hand side of~\eqref{eq:lower} has the same homogeneity, it suffices to treat the~$L = 1$ case; in other words, exhibit~$f \in \Ent(1)$ that certifies~\eqref{eq:lower} with~$L = 1$.


\subsection{Hard instances and their properties}
\label{sec:hard-subclass}

Our hard instances are based on nonsmooth functions that are maxima of shifted coordinate atoms
%
\begin{equation}
\label{eq:backbone}
        g(u)=\max_{k \in [K]}\bigl\{-(u)_{j_k} - b_k\bigr\},
\end{equation}
indexed by a finite set~$[K]$, with distinct coordinates $j_k$ and offsets~$b_k \ge 0$. 
For brevity, we shall refer to such functions as {\em backbones}.
In a backbone, each atom is~$1$-Lipschitz in~$\ell_\infty$-norm, and so is the whole backbone; this will be used later on.
By definition, the set of {\em $g$-active atoms} at~$u \in \ri(\Simp)$ is
\[
A_g(u):=\bigl\{k\in [K]:\ -(u)_{j_k}-b_k =  g(u)\bigr\},
\]
and~$\smash{\{j_k: k \in A_g(u)\}}$ are {\em $g$-active coordinates~at~$u$}.
To obtain a legitimate~$\smash{f \in \Fh(1)}$, we take the right Bregman--Moreau envelope of a backbone~$g$.
Indeed, by Lemma~\ref{lem:envelope-regularity} we have~$\smash{\eta S_\eta[g](\cdot) \in \Fh(1)}$ for any~$\eta > 0$; later on, we shall fix~$\eta = {1}/{2}$.

%
%
Next, we prove some regularity properties of the right Bregman-Moreau envelope of a backbone.
We begin with a lemma that controls the entrywise growth of the prox-mapping associated with~$\eta g$.

\begin{lemma}
\label{lem:entry-growth}
Let $g$ be as in~\eqref{eq:backbone}, and~$\eta < 1$. 
For all~$x\in\ri(\Simp)$ and~$j \in [d]$, 
\begin{equation}
\label{eq:entry-growth}
0 < \eta (\prox_{\eta g}(x))_j 
\le \frac{\eta}{1-\eta} (x)_j.
\end{equation}
In particular, for~$\eta \le \frac{1}{2}$ one has~$0 < \eta \prox_{\eta g}(x) \le x$ entrywise.
\end{lemma}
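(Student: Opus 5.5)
We prove the bound by reading it off the KKT conditions of the strictly convex problem \eqref{eq:prox}. Fix $x\in\ri(\Simp)$ and put $u:=\prox_{\eta g}(x)$. By Lemma~\ref{lem:prox-mapping}, $u$ is well defined and lies in $\ri(\Simp)$, so $(u)_j>0$ for every $j$. This gives the left inequality at once.

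For the right inequality, write the optimality condition on the affine hull of the simplex. The objective is $\eta g(u)+\Div(x,u)$. The term $\Div(x,\cdot)$ has full gradient $-x/u$ (entrywise ratio), up to the normal direction $\ones$. The backbone $g$ has subdifferential $\conv\{-e_{j_k}: k\in A_g(u)\}$, since each atom $-(u)_{j_k}-b_k$ has gradient $-e_{j_k}$. Hence there exist weights $\lambda_k\ge 0$ with $\sum_{k\in A_g(u)}\lambda_k=1$, and a multiplier $\mu\in\R$ for the constraint $\langle \ones,u\rangle=1$, such that
\begin{equation*}
-\eta\sum_{k\in A_g(u)}\lambda_k e_{j_k} - \frac{x}{u} + \mu\ones = 0 .
\end{equation*}
No multipliers for $u\ge 0$ are needed because $u$ is interior. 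Reading this entrywise, set $c_j:=\eta\lambda_k$ if $j=j_k$ for some active $k$, and $c_j:=0$ otherwise. Since the coordinates $j_k$ are distinct, each $j$ receives at most one weight, and so
\[
(x)_j=(\mu-c_j)(u)_j,\qquad 0\le c_j\le\eta,\qquad \sum_j c_j=\eta .
\]

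Next we identify $\mu$. Summing over $j$ and using $\sum_j (x)_j=\sum_j (u)_j=1$ gives $1=\mu-\sum_j c_j(u)_j$. Since $\sum_j c_j(u)_j\ge 0$, this yields $\mu\ge 1$. Therefore $(x)_j=(\mu-c_j)(u)_j\ge(1-\eta)(u)_j$, i.e. $(u)_j\le (x)_j/(1-\eta)$. Multiplying by $\eta>0$ gives \eqref{eq:entry-growth}. For $\eta\le\frac12$ we have $\eta/(1-\eta)\le 1$, which gives the final claim.

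The main obstacle is justifying the KKT step rigorously, since $g$ is nonsmooth and we work on the affine hull. We handle it with the standard subdifferential sum rule for a convex function plus a differentiable convex function plus the indicator of an affine set. This applies because $u\in\ri(\Simp)$, so the sum rule applies with no qualification issue, and the normal cone of $\Simp$ at $u$ is $\spanof{\ones}$. The one place requiring care is that distinctness of the $j_k$ is what forces each $c_j\le\eta$, and hence the factor $1-\eta$ instead of something weaker.
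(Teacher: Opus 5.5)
Your proof is correct and follows the paper's argument: the interior optimality condition at the prox point $u$, the entrywise identity $(x)_j=(\mu-c_j)(u)_j$, summing over $j$ to pin down the multiplier (your $1=\mu-\sum_j c_j(u)_j$ is the paper's $\eta(\mu-\hat m_\lambda)=1$ after rescaling $\mu$ by $\eta$), and then $c_j\le\eta$ gives the factor $1-\eta$. One correction to your closing remark: distinctness of the $j_k$ is not what forces $c_j\le\eta$, since with repeated coordinates you would aggregate the weights per coordinate (as the paper does via $\lambda_i=\sum_k w_k\delta_{ij_k}$), and the aggregate is still at most $\eta$ because all the weights sum to one.
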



\begin{proof}
Let~$\hat u := \prox_{\eta g}(x)$.
The left inequality in~\eqref{eq:entry-growth}, i.e.~that~$\prox_{\eta g}(x) \in \ri(\Simp)$, is immediate: 
if~$\hat u$ has a zero entry, then~$\Div(x,\hat u) = +\infty$ by~\eqref{eq:intro-kl}, which contradicts the optimality of~$\hat u$. 
For the right inequality, since the minimum in~\eqref{eq:envelope} is attained on~$\smash{\ri(\Simp)}$, the optimality condition writes as
\begin{equation}
\label{eq:optimality-for-g}
\exists \mu \in \R: \quad \partial g(\hat u) \ni \eta^{-1} \frac{x}{\hat u} - \mu \ones,
\end{equation}
where the division is entrywise.
Now, let~$(j_k)_{k \in A_g(\hat u)}$ be the active part of the finite sequence~$(j_k)_{k \in [K]}$ in~\eqref{eq:backbone}, 
and consider arbitrary subgradient
$\xi(\hat u) = -\sum_{k \in A_g(\hat u)} w_k e_{j_k}$ of~$g(\cdot)$ at~$\hat u$, where~$w_k \ge 0$ and~$\smash{\sum_{k \in A_g(\hat u)} w_k = 1}$. 
Letting~$\lambda_i$, for~$i \in [d]$, be the sum of all active multipliers for each coordinate~$i$,
\[
\lambda_i := \sum_{k \,\in\, A_g(\hat u)} w_k\, \delta_{i j_k},
\]
we can write~$\xi(\hat u) = -\sum_{i \in [d]} \lambda_i e_i$ where~$(\lambda_1, \dots, \lambda_d) \in \Simp$ (in particular,~$\lambda_i \le 1$).
Plugging in~\eqref{eq:optimality-for-g} the relevant subgradient~$\xi(\hat u)$---i.e., one realizing the inclusion in~\eqref{eq:optimality-for-g}---and rearranging, we get
\[
\frac{(x)_i}{(\hat u)_i} = \eta (\mu -\lambda_i), \quad i \in [d].
\]
Multiplying by~$(\hat u)_i$, summing over~$i$, and using that~$x,\hat u \in \Simp$, we see that~$\eta(\mu - \hat m_{\lambda}) = 1$, where~$\hat m_{\lambda} := \sum_{i \in [d]} \lambda_i (\hat u)_i$ 
is the $\lambda$-weighted average of the entries of~$\hat u$.
Combining with the above, 
\begin{equation}
\label{eq:ratio}
\frac{(x)_i}{(\hat u)_i} = 1 + \eta(\hat m_{\lambda} - \lambda_i), \quad i \in [d].
\end{equation}
Noting that~$\hat m_{\lambda} \ge 0$ and~$\lambda_i \le 1$, we get~$(\hat u)_i \le (1-\eta)^{-1} (x)_i$, that is the right inequality in~\eqref{eq:entry-growth}.
\end{proof}

The next lemma controls the value decrease of a backbone caused by smoothing.

\begin{lemma}
\label{lem:loss}
Let~$g$ be as in~\eqref{eq:backbone},~$x\in\ri(\Simp)$, 
and~$\hat u =\prox_{\eta g}(x)$. 
Furthermore, suppose that~$(\hat u)_{j_k} \le M$ for all~$g$-active coordinates at~$\hat u$, i.e.~for all~$\smash{k \in A_g(\hat u)}$.
Then
\begin{equation}
\label{eq:loss-bound}
        \Env_\eta[g](x)\ge g(x)-\eta M.
\end{equation}
\end{lemma}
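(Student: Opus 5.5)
We need $S_\eta[g](x) = g(\hat u) + \eta^{-1}D(x,\hat u) \ge g(x) - \eta M$. The plan is to compare $g(x)$ with $g(\hat u)$ through an active atom, and then pay for the discrepancy using the KL term together with the explicit ratio formula \eqref{eq:ratio} from the proof of Lemma~\ref{lem:entry-growth}.

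\textbf{Reduction to a bound on $\sum_i \lambda_i(x-\hat u)_i$.} Because $g$ is convex and $\xi(\hat u) = -\sum_i \lambda_i e_i$ is a subgradient at $\hat u$, one might try to bound $g(x)$ from above. Convexity, however, only gives $g(x)\ge g(\hat u)+\langle \xi,x-\hat u\rangle$, which points the wrong way. So I would instead use the max structure of $g$ directly. Let $k$ attain the maximum in $g(x)$. Then
\[
g(x) = -(x)_{j_k}-b_k = -(\hat u)_{j_k}-b_k + (\hat u - x)_{j_k} \le g(\hat u) + (\hat u-x)_{j_k}.
\]
This reduces the claim to showing $(\hat u - x)_{j_k} \le \eta^{-1}D(x,\hat u) + \eta M$. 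That coordinate may, however, be inactive at $\hat u$, so a cleaner route is preferable.

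\textbf{Using the optimality structure.} The plan is to use \eqref{eq:ratio}: $(x)_i/(\hat u)_i = 1+\eta(\hat m_\lambda - \lambda_i)$. For coordinates with $\lambda_i = 0$ this gives $(x)_i \ge (\hat u)_i$, so $\hat u$ is no larger than $x$ off the active set. For the maximizing coordinate $j_k$, there are two cases.
\begin{itemize}
\item If $j_k$ is not active at $\hat u$, then $\lambda_{j_k} = 0$, so $(\hat u)_{j_k}\le(x)_{j_k}$ and $(\hat u - x)_{j_k}\le 0$. The claim then follows immediately, since $D\ge 0$.
\item If $j_k$ is active, then $(\hat u)_{j_k} - (x)_{j_k} = (\hat u)_{j_k}\,\eta(\lambda_{j_k}-\hat m_\lambda) \le \eta (\hat u)_{j_k} \le \eta M$, using $\lambda\le 1$, $\hat m_\lambda\ge0$, and the hypothesis $(\hat u)_{j_k}\le M$.
\end{itemize}
In both cases $g(x)\le g(\hat u)+\eta M \le S_\eta[g](x)+\eta M$, because $D(x,\hat u)\ge 0$.

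\textbf{Expected obstacle.} The main step requiring care is that \eqref{eq:ratio} was derived inside the proof of Lemma~\ref{lem:entry-growth} for a specific subgradient realizing the optimality condition. I would re-derive it briefly and note that $\lambda$ is supported on the $g$-active coordinates at $\hat u$. This support property is exactly what makes the case split work. The rest, including the fact that the KL term can simply be dropped, is routine.
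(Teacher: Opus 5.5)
Your proof is correct and is essentially the paper's argument: both use the identity \eqref{eq:ratio} to compare $g(x)$ with $g(\hat u)$, and both drop the nonnegative KL term. The only difference is in the last step. The paper proves the two-sided bound $\|x-\hat u\|_\infty\le\eta M$, which needs $\hat m_\lambda\le M$ and $(\hat u)_i\le 1$ on inactive coordinates, and then uses the $\ell_\infty$-Lipschitzness of the backbone. You instead need only the one-sided estimate $(\hat u-x)_{j_k}\le\eta\lambda_{j_k}(\hat u)_{j_k}$ at an atom attaining $g(x)$, because inactive coordinates have the favorable sign; this is slightly more economical but the same idea.
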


\begin{proof}
Rearranging \eqref{eq:ratio} gives $(x)_i-(\hat u)_i=\eta (\hat m_\lambda-\lambda_i) (\hat u)_i$ in terms of~$\hat m_{\lambda} = \sum_{i \in [d]} \lambda_i (\hat u)_i$, the quantity introduced in the proof of Lemma~\ref{lem:entry-growth}. 
Whence
\begin{equation}
\label{eq:displacement}
        \abs{(x)_i-(\hat u)_i}=\eta \abs{\hat m_\lambda-\lambda_i} (\hat u)_i \, .
\end{equation}
Since~$\hat m_\lambda$ is a convex combination of the active-coordinate values~$\{(\hat u)_{j_k}, k \in A_g(\hat u)\}$, we have~$\hat m_\lambda\le M$ by the premise of the lemma.
Let us show that
\begin{equation}
\label{eq:uniform-bound}
\| x - \hat u \|_{\infty} \le \eta M.
\end{equation}
To that end, since~$\hat m_\lambda, \lambda_i \in [0,1]$,
we get~$\abs{(x)_i - (\hat u)_i} \le \eta M$ for active coordinates~$i \in \{j_k, k \in A_g(\hat u)\}$. 
Meanwhile, for inactive coordinates~$\lambda_i = 0$, so~\eqref{eq:displacement} and~$(\hat u)_i \le 1$ result in
$
\abs{(x)_i - (\hat u)_i} \le \eta \hat m_{\lambda} \le \eta M.
$
This verifies~\eqref{eq:uniform-bound}. 
Now, by the $1$-Lipschitzness of a backbone, it follows that
$
g(\hat u)\ge g(x)-\eta M.
$
In turn, this implies~$\Env_\eta[g](x)=g(\hat u)+\eta^{-1}\Div(x,\hat u) \ge g(\hat u) \ge g(x)-\eta M$, as claimed in~\eqref{eq:loss-bound}.
\end{proof}

\section{Proof of Theorem~\ref{th:intro-lower}}
\label{sec:proof}
In this section, we prove Theorem~\ref{th:lower}, and Theorem~\ref{th:intro-lower} along with it.
To that end, we first implement the resisting oracle announced in Section~\ref{sec:formal}: interacting with an arbitrary method~$\Mtd\in\FOM_d(T)$, cf.~\eqref{eq:method-maps}--\eqref{eq:method-iterates}, after each query we add a new atom at the currently smallest-mass coordinate, with offset increased in constant increments. 
Smoothing via the right Bregman--Moreau envelope with~$\eta = \half$, while producing an instance in~$\Fh(1)$ by Lemma~\ref{lem:envelope-regularity}, ensures that a new atom is strictly dominated near all the previous queries; this effect is attained through~Lemma~\ref{lem:loss} and offsets. 
This guarantees transcript consistency: the final instance matches the earlier oracle answers.


The argument proceeds in three stages. 
In Section~\ref{sec:adaptive-backbone}, we detail the construction. 
In Section~\ref{sec:transcript-consistency}, we prove a locality lemma ensuring that new atoms are dominated by the previous ones, and deduce transcript consistency. 
In Section~\ref{sec:gap-proof}, we bound the suboptimality gap; this is done by augmenting the exposed face with an extra dimension and using the center of the augmented face.


\subsection{Constructing the hard instance}
\label{sec:adaptive-backbone}

Recall the assumption~$d \ge 8(T+1)^2+T$ in the premise. Throughout, fix~$\Mtd\in\FOM_d(T)$ and set
\begin{equation}
\label{eq:params}
        \veps_T :=\frac{1}{8(T+1)^2}\,,
        \qquad
        \rho:=\frac{2}{3}\,,
        \qquad
        \delta_T :=(2+\rho)\veps_T =\frac{1}{3(T+1)^2}\,.
        \qquad
\end{equation}
We shall select distinct coordinates~$j_1,\dots,j_{T},j_{T+1}$ incrementally, in response to~$\Mtd$'s queries. 

{\em Rounds~$t\in [T]$.} Once~$\Mtd$ has issued a query~$x_t\in\ri(\Simp)$ determined via~\eqref{eq:method-iterates} by the previous answers, 
we construct the answer to~$x_t$ by selecting a coordinate where~$x_t$ has the smallest mass:
\begin{equation}
\label{eq:fresh}
        j_t \in \Argmin_{i \,\in\, [d] \setminus E_{t}} \; \langle x_t, e_i \rangle, \qquad E_{t} := \{j_1,\ldots,j_{t-1}\},
\end{equation}
%
where~$E_{1} = \emptyset$ by convention.
We then let
\begin{equation}
\label{eq:atoms}
        a_t(x):=-\langle x, e_{j_t} \rangle -(t-1)\delta_T, \qquad 
        g_t(x) :=\max_{s \in [t]} a_s(x),
        \qquad
        f_t := \tfrac{1}{2}\Env_{1/2}[g_t]
\end{equation}
and return
\begin{equation}
\label{eq:oracle-answers}
\Orc_{f_t}(x_t) = (f_t(x_t), \nabla f_t(x_t)),
\end{equation}
cf.~\eqref{eq:oracle}, as the answer to query~$x_t$. 
Note that~$g_t$, cf.~\eqref{eq:atoms}, is a backbone in the sense of~\eqref{eq:backbone}. 
This procedure specifies the resisting oracle at rounds~$t \in [T]$, corresponding to~$\Mtd$'s queries~$x_1, \dots, x_T$.

{\em Final round.}
To finalize the construction and exhibit the hard instance, we use the candidate minimizer~$x_{T+1} \in \Simp$ returned by~$\Mtd$ after the final oracle call.
Namely, we invoke~\eqref{eq:fresh} with~$t = T+1$, producing
\[
j_{T+1} \in \Argmin_{i \,\in\, [d] \setminus E_{T}} \; \langle x_{T+1}, e_i \rangle;
\]
we then let~$g_{T+1}(x) := \max\{g_T(x), a_{T+1}(x)\}$ with~$a_{T+1}(x):=-\langle x, e_{j_{T+1}} \rangle - T\delta_T$, cf.~\eqref{eq:atoms}, and take
\begin{equation}
\label{eq:final-instance}
f_{T+1} := \tfrac{1}{2} \Env_{1/2}[g_{T+1}]
\end{equation}
as the hard instance. Notice that~$g_{T+1}$ is still a backbone, therefore~$f_{T+1} \in \Fh(1)$ by Lemma~\ref{lem:envelope-regularity}.

Before we begin the proof of transcript consistency, let us record one implication of Lemma~\ref{lem:entry-growth}.

\begin{proposition}
\label{prop:fresh-entries}
For every~$t \in [T]$, selection rule~\eqref{eq:fresh} guarantees the following for~$\veps_T$ as in~\eqref{eq:params}:
\begin{equation}
\label{eq:fresh-entries}
\langle \prox_{\half g_{t}}(x_t), e_{j_t} \rangle 
\le 2\langle x_t, e_{j_t} \rangle 
\le 2\veps_T.
\end{equation}
Moreover, the right-hand inequality in~\eqref{eq:fresh-entries} extends to~$t = T+1$, i.e.~it holds that~$\langle x_{T+1}, e_{j_{T+1}} \rangle \le \veps_T.$ 
\end{proposition}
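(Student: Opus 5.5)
The left inequality in~\eqref{eq:fresh-entries} follows directly from Lemma~\ref{lem:entry-growth}. The function $g_t$ defined in~\eqref{eq:atoms} is a backbone in the sense of~\eqref{eq:backbone}: its coordinates $j_1,\dots,j_t$ are distinct and its offsets $(s-1)\delta_T$ are nonnegative. Apply that lemma with $\eta = \half$ and $x = x_t \in \ri(\Simp)$. It gives $\half(\prox_{\half g_t}(x_t))_j \le (x_t)_j$ for every $j\in[d]$. Specializing to $j = j_t$ and multiplying by $2$ yields $\langle \prox_{\half g_t}(x_t), e_{j_t}\rangle \le 2\langle x_t, e_{j_t}\rangle$.

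For the right inequality, I will use a pigeonhole (averaging) argument on the selection rule~\eqref{eq:fresh}. The coordinate $j_t$ minimizes $(x_t)_i$ over $i \in [d]\setminus E_t$. This set has $d - (t-1) \ge d - T$ elements, since $t \le T+1$. The entries of $x_t$ are nonnegative and sum to one, so the minimum over a set of $m$ coordinates is at most the average over that set. That average is at most $1/m$, which gives
\[
\langle x_t, e_{j_t}\rangle \le \frac{1}{d-(t-1)} \le \frac{1}{d-T}.
\]
By the standing assumption $d \ge 8(T+1)^2 + T$, we have $d - T \ge 8(T+1)^2 = \veps_T^{-1}$. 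Hence $\langle x_t, e_{j_t}\rangle \le \veps_T$, and doubling gives the right inequality in~\eqref{eq:fresh-entries}.

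The same averaging argument covers the final round $t = T+1$. Here $x_{T+1}\in\Simp$ may lie on the boundary, but this does not matter: the bound uses only nonnegativity and unit sum, and $|[d]\setminus E_{T+1}| = d - T$. It therefore gives $\langle x_{T+1}, e_{j_{T+1}}\rangle \le 1/(d-T) \le \veps_T$. The left inequality is not claimed at $t=T+1$, which is consistent with the fact that Lemma~\ref{lem:entry-growth} requires an interior point.

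There is no real obstacle. The only points to check are that the counts of excluded coordinates ($t-1 \le T$) match the slack $+T$ in the dimension assumption, and that each $g_t$ is a legitimate backbone with distinct coordinates, so that Lemma~\ref{lem:entry-growth} applies.
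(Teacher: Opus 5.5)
Your proof is correct and matches the paper's: the left inequality is Lemma~\ref{lem:entry-growth} with $\eta=\half$ at the interior point $x_t$, and the right one is the same pigeonhole bound over the at least $d-T \ge \veps_T^{-1}$ unexplored coordinates. The paper phrases that bound by contradiction rather than through the average, and, as you note, it needs only nonnegativity and unit sum, so it covers a possibly boundary $x_{T+1}$ as well.
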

\begin{proof}
For any~$t \in [T+1]$, the cardinality of exposed support~$E_t = \{j_1,\dots,j_{t-1}\}$ is at most~$T$, so that of~$H_t := [d] \setminus E_t$ is at least
$
d-T \ge \veps_T{}^{-1},
$
cf.~\eqref{eq:params}. 
As such,~$\min_{i \in H_t} \langle x_t, e_i \rangle > \veps_T$ would imply
\[
\langle x_t, \ones \rangle \ge \sum_{i \in H_t} \langle x_t, e_i \rangle \ge |H_t| \, \min_{i \in H_t} \, \langle x_t, e_i \rangle > 1,
\]
contradicting~$x_t \in \Simp$. This proves the right-hand inequality in~\eqref{eq:fresh-entries} for all~$t \in [T+1]$. 
For~$t \in [T]$, we additionally have~$x_{t} \in \ri(\Simp)$, and the left-hand inequality in~\eqref{eq:fresh-entries} follows by Lemma~\ref{lem:entry-growth}.
\end{proof}


\subsection{Ensuring transcript consistency}
\label{sec:transcript-consistency}

In this section, we establish transcript consistency for the construction presented in Section~\ref{sec:adaptive-backbone}.
The crux of the argument is that the ``fresh'' atom introduced at round~$t$ remains strictly dominated (``locally invisible'') in a vicinity~$\mathcal{U}_s$ of~$\smash{\hat u_s = \prox_{\half g_s}(x_s)}$; thus,~$g_t = g_s$ over~$\mathcal{U}_s$. 
By Lemma~\ref{lem:right-bm-locality}, this gives local coincidence of~$f_s = S_{1/2}[g_s]$ and~$f_{T+1} = S_{1/2}[g_{T+1}]$ in some relative neighborhood of~$x_s$.

\begin{lemma}
\label{lem:local-invisibility}
For all~$1\le s<t\le T+1$, the following holds.
\vspace{-0.1cm}
\begin{enumerate}
\item 
The point~$\hat u_s = \prox_{\half g_s}(x_s)$ satisfies\vspace{-0.2cm}
\begin{equation}
\label{eq:margin}
         a_t(\hat u_s) \le g_s(\hat u_s) - \rho\veps_T. \vspace{-0.1cm}
\end{equation}
\item As a result, there is a relative neighborhood~$\mathcal{X}_s \subseteq\ri(\Simp)$ of~$x_s$ where one has~$f_t(x) = f_s(x)$.
\end{enumerate}
\end{lemma}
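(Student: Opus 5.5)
The plan is to prove part~1 by a direct comparison of the fresh atom $a_t$ with the atom $a_s$ introduced at round~$s$, both evaluated at $\hat u_s$. Part~2 then follows from part~1 by continuity and Lemma~\ref{lem:right-bm-locality}.

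\emph{Part 1.} Since $\hat u_s \in \ri(\Simp)$ by Lemma~\ref{lem:entry-growth}, its entries are positive, so
\[
a_t(\hat u_s) = -\langle \hat u_s, e_{j_t}\rangle - (t-1)\delta_T \le -(t-1)\delta_T .
\]
For the lower bound on $g_s(\hat u_s)$, I use that $g_s$ is a maximum containing the atom $a_s$. Proposition~\ref{prop:fresh-entries} applies since $s \le T$, and gives $\langle \hat u_s, e_{j_s}\rangle \le 2\veps_T$. Hence
\[
g_s(\hat u_s) \ge a_s(\hat u_s) = -\langle \hat u_s, e_{j_s}\rangle - (s-1)\delta_T \ge -2\veps_T - (s-1)\delta_T .
\]
Subtracting the two bounds gives
\[
g_s(\hat u_s) - a_t(\hat u_s) \ge (t-s)\delta_T - 2\veps_T \ge \delta_T - 2\veps_T = \rho\veps_T ,
\]
using $t - s \ge 1$ and $\delta_T = (2+\rho)\veps_T$ from~\eqref{eq:params}. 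This is~\eqref{eq:margin}.

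\emph{Part 2.} I apply Lemma~\ref{lem:right-bm-locality} with $g := g_s$, $\tilde g := g_t$, $\eta = \tfrac12$ and $\hat x := x_s \in \ri(\Simp)$. Both functions are finite, continuous and convex backbones. Moreover $g_t = \max\{g_s, a_{s+1},\dots,a_t\} \ge g_s$ on $\Simp$.

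It remains to show that $g_t = g_s$ on a relative neighborhood of $\hat u_s$. Part~1 applies to every pair $(s,r)$ with $s < r \le t$, so each added atom satisfies the strict margin $a_r(\hat u_s) \le g_s(\hat u_s) - \rho\veps_T < g_s(\hat u_s)$. Each $a_r$ and $g_s$ is continuous, and there are finitely many atoms, so there is a relative neighborhood $\mathcal U_s$ of $\hat u_s$ on which $a_r < g_s$ for all $s < r \le t$. On $\mathcal U_s$ we therefore have $g_t = g_s$.

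Lemma~\ref{lem:right-bm-locality} then yields a relative neighborhood $\mathcal X_s$ of $x_s$ on which $\Env_{1/2}[g_t] = \Env_{1/2}[g_s]$. Intersecting with $\ri(\Simp)$ if necessary, we get $f_t = f_s$ on $\mathcal X_s$. This covers $t = T+1$ as well, since only $x_s$ with $s \le T$ needs to be interior.

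I do not expect a serious obstacle. The one point that needs care is that part~1 must be applied to \emph{every} atom added between rounds $s$ and $t$, not only to $a_t$. The whole argument also rests on the bound $\langle \hat u_s, e_{j_s}\rangle \le 2\veps_T$ from Proposition~\ref{prop:fresh-entries}. That bound is exactly what makes the offset increment $\delta_T$ beat the $2\veps_T$ loss, leaving margin $\rho\veps_T$.
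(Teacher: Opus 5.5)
Your proposal is correct and follows the paper's proof essentially step for step. Part~1 uses the same comparison of $a_t$ with the last atom $a_s$ via Proposition~\ref{prop:fresh-entries}, and part~2 uses the same application of Lemma~\ref{lem:right-bm-locality} with $g = g_s$ (the only cosmetic differences are that you get $\mathcal U_s$ from continuity instead of the paper's explicit $\ell_\infty$-ball of radius $\rho\veps_T/4$ via $1$-Lipschitzness, and you handle general $t$ directly where the paper writes out $t = T+1$).
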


\begin{proof}
\proofstep{1}. 
By~\eqref{eq:atoms}, a backbone dominates its last atom, so
$
g_s(\hat u_s) \ge a_s(\hat u_s) = -\langle \hat u_{s}, e_{j_s} \rangle - (s-1)\delta_T.
$
When combined with the bound~$\langle \hat u_{s}, e_{j_s} \rangle \le 2\veps_T$ furnished by Proposition~\ref{prop:fresh-entries}, cf.~\eqref{eq:fresh-entries}, this gives
\[
g_s(\hat u_s)\ge-2\veps_T-(s-1)\delta_T.
\] 
On the other hand, since~$\hat u_{s}$ has nonnegative entries,~$a_t(\hat u_s)=-\langle \hat u_{s}, e_{j_t}\rangle - (t-1)\delta_T \le-(t-1)\delta_T$. 
Subtracting the two estimates and using the facts that~$t > s$ and~$\delta_T=(2+\rho)\veps_T$, cf.~\eqref{eq:params}, we get
\begin{equation*}
        g_s(\hat u_s)-a_t(\hat u_s)
        \;\ge\;
        (t-s)\delta_T-2\veps_T
        \;\ge\;
        \delta_T-2\veps_T
        \;=\; \rho\veps_T.
\end{equation*}

\noindent\proofstep{2}. 
Our plan is to invoke Lemma~\ref{lem:right-bm-locality}.
Fix~$s<t$ and consider the neighborhood~$\mathcal{U}_s$ of~$\hat u_s$ as follows:
\[
\mathcal{U}_s 
:= 
\left\{
	u \in \ri(\Simp): \|u - \hat u_s\|_{\infty} \le \frac{\rho\veps_T}{4}
\right\}.
\]
Combining~\eqref{eq:margin} with~$1$-Lipschitzness of~$a_t(\cdot)$ and~$g_s(\cdot)$ in~$\ell_\infty$-norm, one has for all~$u \in \mathcal{U}_s$ and~$t > s$:
\[
        g_s(u)-a_t(u)
        \ge
        g_s(\hat u_s)-a_t(\hat u_s)-\frac{\rho\veps_T}{2}
        \stackrel{\eqref{eq:margin}}{\ge}
        \rho\veps_T - \frac{\rho\veps_T}{2}
        =\frac{\rho\veps_T}{2}
        >0.
\]
Thus, all future atoms~$a_t$,~$t > s$, are strictly dominated by~$g_s$ in the relative neighborhood~$\mathcal{U}_s$ of~$\hat u_s$ (note that we used that~$\hat u_s$ is in the relative interior of~$\Simp$). 
Applying this to~$t = s+1, \ldots, T+1$, 
\begin{equation*}
        g_{T+1}(u)
        =\max \left\{g_s(u), \max_{s < t \le T+1} a_t(u)\right\}
        =g_s(u) 
      \qquad \forall u \in \mathcal{U}_s.
\end{equation*}
Meanwhile, we have~$g_{T+1} \ge g_s$ pointwise on~$\Simp$, directly from the definitions of~$g_{T+1}$ and~$g_s$, cf.~\eqref{eq:atoms}.
As such, we may invoke Lemma~\ref{lem:right-bm-locality} with~$g=g_s$,~$\tilde g=g_{T+1}$,~$\hat x=x_s$ and~$\hat u = \hat u_s$; 
this gives a relative neighborhood~$\mathcal{X}_s \subseteq \ri(\Simp)$ in which~$S_{1/2}[g_{T+1}] = S_{1/2}[g_s]$, that is~$f_{T+1} = f_s$ (cf.~\eqref{eq:fresh}--\eqref{eq:final-instance}).
\end{proof}

\begin{proposition}[Transcript consistency]
\label{prop:transcript}
The queries~$x_1,\dots,x_T$, the oracle answers returned in~\eqref{eq:oracle-answers}, and the reported point~$x_{T+1}$ coincide with the transcript of~$\Mtd$ run on the objective~$f_{T+1}$.
\end{proposition}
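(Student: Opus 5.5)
The plan is an induction over rounds, using Lemma~\ref{lem:local-invisibility} to show that every answer returned by the resisting oracle equals the true oracle answer of the final instance $f_{T+1}$. The key claim is
\[
\Orc_{f_{t}}(x_t) = \Orc_{f_{T+1}}(x_t), \qquad t \in [T].
\]
By part~2 of Lemma~\ref{lem:local-invisibility}, applied with $s = t$ and the later index $T+1 > t$, there is a relative neighborhood $\mathcal{X}_t \subseteq \ri(\Simp)$ of $x_t$ on which $f_{T+1} = f_t$. Equality of values at $x_t$ is then immediate. Equality of tangent gradients follows because $\bnabla f(x)$ depends only on the values of $f$ in a $\Tan$-neighborhood of $x \in \ri(\Simp)$, as remarked after~\eqref{eq:grad-tangent}; equivalently, one can invoke the ``in particular'' part of Lemma~\ref{lem:right-bm-locality} and scale by $\tfrac12$. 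Hence $\Orc_{f_t}(x_t) = (f_{T+1}(x_t), \bnabla f_{T+1}(x_t))$ for every $t \in [T]$.

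Next I would run $\Mtd$ on $f_{T+1}$ and denote its iterates by $x^\Mtd_t(f_{T+1})$, as in~\eqref{eq:method-iterates}. I would show by induction on $t$ that $x^\Mtd_t(f_{T+1}) = x_t$ for $t \in [T+1]$. The base case holds since $x_1 = \Phi_1$ in both runs. For the inductive step, suppose the queries agree up to round $t \le T$. Then the responses $\Orc_{f_{T+1}}(x_1), \dots, \Orc_{f_{T+1}}(x_t)$ coincide with the resisting answers $\omega_1, \dots, \omega_t$ by the claim above. Since $\Phi_{t+1}$ (or $\bar\Phi_{T+1}$ when $t = T$) is a fixed deterministic map of these answers, the next points coincide too. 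This gives agreement of all queries, all answers, and the output $x_{T+1}$.

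One subtlety needs a check. The resisting oracle defines $j_t$, and hence $f_t$, using only $x_1, \dots, x_t$. The final instance $f_{T+1}$ depends on the whole interaction, including $x_{T+1}$. There is no circularity, because the induction is carried out for the fixed, already-constructed $f_{T+1}$, and each step uses only the determinism of the maps $\Phi_t$. I would also note that $f_{T+1} \in \Fh(1)$ by Lemma~\ref{lem:envelope-regularity}, so $\Orc_{f_{T+1}}$ is a legitimate oracle.

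The main obstacle is the gradient equality. It needs local agreement on a full relative neighborhood, not just at the single point $x_t$. This is exactly what part~2 of Lemma~\ref{lem:local-invisibility} provides, together with the interiority $x_t \in \ri(\Simp)$. Once that is granted, the rest is bookkeeping.
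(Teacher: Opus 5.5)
Your proposal is correct and follows essentially the same route as the paper. You apply Lemma~\ref{lem:local-invisibility} with later index $T+1$ to get $f_{T+1}=f_s$ on a relative neighborhood of each $x_s$, hence identical values and tangent gradients, and then induct through the deterministic maps $\Phi_t$ and $\bar\Phi_{T+1}$ to match the queries, the answers, and the reported point.
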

\begin{proof}
Fix any~$s\in [T]$. 
By Lemma~\ref{lem:local-invisibility} with~$t = T+1$, we get~$f_{T+1}=f_s$ in a relative neighborhood~$\mathcal{X}_s$ of~$x_s$; in particular~$f_{T+1}(x_s)=f_s(x_s)$. 
Moreover, since the tangent gradient of~$f \in \Fh(1)$ at~$x_s$ is determined by the values of~$f$ in a~$\Tan$-neighborhood of~$x_s$ (cf.~\eqref{eq:grad-tangent}), we have~$\bnabla f_{T+1}(x_s)=\bnabla f_s(x_s)$ and
\begin{equation}
\label{eq:oracle-equal}
\Orc_{f_{T+1}}(x_s)=\Orc_{f_s}(x_s) \qquad \forall s \in [T].
\end{equation}
We argue by induction that, when run on~$f_{T+1}$,~$\Mtd$ queries exactly~$x_1,\dots,x_T$ and reports~$x_{T+1}$. For the base case, the first query~$x_1=\Phi_1$ is fixed by~$\Mtd$ independently of the objective, cf.~\eqref{eq:method-iterates}, so it coincides with the constructed one. 
Assume, for some~$2 \le t \le T$, that the first~$t-1$ queries~$x_1,\dots,x_{t-1}$ coincide with those in the transcript of~$\Mtd$ run on~$f_{T+1}$, and the corresponding constructed answers~$\Orc_{f_{1}}(x_1), \dots, \Orc_{f_{t-1}}(x_{t-1})$ coincide with those in the transcript,~$\Orc_{f_{T+1}}(x_1), \dots, \Orc_{f_{T+1}}(x_{t-1})$. 
By~\eqref{eq:method-iterates}, the next query of~$\Mtd$ given the constructed data is~$x_{t}=\Phi_{t}(\Orc_{f_{1}}(x_1),\dots,\Orc_{f_{t-1}}(x_{t-1}))$, 
and the next query in the transcript is~$\smash{\Phi_{t}(\Orc_{f_{T+1}}(x_1),\dots,\Orc_{f_{T+1}}(x_{t-1}))}$, identical by the induction premise. 
Now due to~\eqref{eq:oracle-equal}, the next constructed answer~$\smash{\Orc_{f_{t}}(x_{t})}$ is identical to the corresponding answer~$\smash{\Orc_{f_{T+1}}(x_{t})}$ in the transcript. 
This advances the induction. 
Finally, the same argument applied for~$t = T+1$, with the report map~$\smash{\bar\Phi_{T+1}}$ in place of~$\Phi_t$, gives consistency of the reported point. 
\end{proof}

\subsection{Bounding the suboptimality gap}
\label{sec:gap-proof}

In this section, we complete the proof of Theorem~\ref{th:lower} by estimating the suboptimality gap of~$f_{T+1}$ at the reported point~$x_{T+1}$. 
To this end, we first bound~$f_{T+1}(x_{T+1})$ from below, by using Lemmas~\ref{lem:entry-growth}--\ref{lem:loss} and Proposition~\ref{prop:fresh-entries};
then we exhibit a comparator whose support includes an unexplored direction.


\proofstep{1}:~{\em Lower bound at the reported point.} 
Recall that
$
\langle x_{T+1}, e_{j_{T+1}} \rangle \le \veps_T
$
by Proposition~\ref{prop:fresh-entries}, whence 
\begin{align}
\label{eq:final-atom}
	g_{T+1}(x_{T+1})
	\ge a_{T+1}(x_{T+1})
	&= 
	-\langle x_{T+1}, e_{j_{T+1}} \rangle - T\delta_T \notag\\
	&\ge
	-\veps_T - T\delta_T.
\end{align}
Let us bound each~$g_{T+1}$-{active coordinate} (of~$\hat u_{T+1}$) at the proximal point~$\hat u_{T+1}$ of~$x_{T+1}$, i.e.~$\langle \hat u_{T+1}, e_{j_t} \rangle$ for~$t \in [T+1]$ such that~$a_t(\hat u_{T+1}) = g_{T+1}(\hat u_{T+1})$. 
For such~$t$, the final atom is dominated at~$\hat u_{T+1}$: one has
$
a_t(\hat u_{T+1}) \ge a_{T+1}(\hat u_{T+1}),
$
that is,~$- \langle \hat u_{T+1}, e_{j_t} \rangle -(t-1)\delta_T \ge -\langle \hat u_{T+1}, e_{j_{T+1}} \rangle -T\delta_T$. 
Rearranging,
\begin{equation*}
        \langle \hat u_{T+1}, e_{j_t} \rangle 
        \le \langle \hat u_{T+1}, e_{j_{T+1}} \rangle + (T+1-t)\delta_T 
        \le \langle \hat u_{T+1}, e_{j_{T+1}} \rangle + T\delta_T.
\end{equation*}
Lemma~\ref{lem:entry-growth}, applied to~$g = g_{T+1}$ and~$x = x_{T+1}$, gives~$\langle \hat u_{T+1}, e_{j_{T+1}} \rangle \le 2 \langle x_{T+1}, e_{j_{T+1}} \rangle \le 2\veps_T$, whence
\begin{equation*}
        \langle \hat u_{T+1}, e_{j_t} \rangle  \le 2\veps_T + T\delta_T
\end{equation*}
for every~$g_{T+1}$-active coordinate~$j_t$ at~$\hat u_{T+1}$. 
As such, the premise of Lemma~\ref{lem:loss} holds for~$g = g_{T+1}$ and~$x = x_{T+1}$, with~$M = 2\veps_T + T\delta_T$.
Combining that lemma (cf.~\eqref{eq:loss-bound}) with~\eqref{eq:final-atom} results in
\begin{equation}
\label{eq:env-lower}
\begin{aligned}
\Env_{1/2}[g_{T+1}](x_{T+1})
        \stackrel{\eqref{eq:loss-bound}}{\ge}
g_{T+1}(x_{T+1}) - \frac{M}{2}
        \stackrel{\eqref{eq:final-atom}}{\ge}
         -2\veps_T - \frac{3T}{2} \delta_T.
\end{aligned}
\end{equation}

\proofstep{2}:~{\em Upper bound at the comparator}. 
Consider~$\bar x = \frac{1}{T+1} \sum_{t \in [T+1]} e_{j_{t}}$, i.e.~uniform on the selected coordinates; note that this includes the coordinate~$j_{T+1}$ revealed {\em after}~$\Mtd$'s reported point~$x_{T+1}$. 
Clearly,~$\bar x \in \Simp$. 
At~$\bar x$, each atom evaluates as~$a_t(\bar x) = -\langle \bar x, e_{j_t}\rangle-(t-1)\delta_T = -\tfrac1{T+1}-(t-1)\delta_T$. 
Since~$(t-1)\delta_T \ge 0$,  we get
\begin{equation*}
        g_{T+1}(\bar x)
        =
        \max_{t \in [T+1]}  a_t(\bar x)
        \le-\frac1{T+1}.
\end{equation*}
Since~$\bar x$ is feasible in~\eqref{eq:envelope}, this bound extends to the envelope:
$
        \Env_{1/2}[g_{T+1}](\bar x) \le g_{T+1}(\bar x) = -\frac1{T+1}.
$
Combining this with~\eqref{eq:env-lower}, we get 
\begin{align*}
\Env_{1/2}[g_{T+1}](x_{T+1})-\min_{x \in \Simp}\Env_{1/2}[g_{T+1}](x)
&\ge
\frac1{T+1}-2\veps_T-\frac{3T}{2}\delta_T.
\end{align*}
Plugging in the parameter values~$ \veps_T :=\frac{1}{8(T+1)^2}$ and $ \delta_T = \frac{1}{3(T+1)^2} $ from~\eqref{eq:params}, and using~\eqref{eq:final-instance}, we get 
\begin{equation*}
\begin{aligned}
        f(x_{T+1})-\min_{x \in \Simp}f(x)
        \ge
        \frac{1}{2}\left( \frac1{T+1}-2\veps_T-\frac{3T}{2}\delta_T \right)
        &= 
         \frac{1}{2(T+1)}\left(1-\frac{1}{4(T+1)}-\frac{T}{2(T+1)} \right) \\
        &=
         \frac{2T+3}{8(T+1)^2}
         > \frac{1}{4(T+1)}.
\end{aligned}
\end{equation*}
This completes the proof in the case of~$L = 1$. Recall that the general case follows by homogeneity.
%
\qed

\section{Quantum extension}
\label{sec:quantum}


Let~$\Herm$ be the vector space of Hermitian~$d \times d$ matrices with inner product~$\inner{U}{V}:=\tr(UV)$, and let~$\Herm_+$ be the positive-semidefinite cone in~$\Herm$. 
Define the spectrahedron of density matrices
\begin{equation}
\label{eq:dens-def}
\Dens:=\{X\in\Herm_+:  \tr(X) = 1\}.
\end{equation}
The set~$\smash{\ri(\Dens)}$ consists of positive-definite unit-trace matrices in~$\smash{\Herm}$.
To formulate the result, it is convenient to use the notion of spectral functions.
Recall that any symmetric function~$\smash{\phi: \R^d \to \R}$ defines the spectral function~$\phi \circ \lambda$ on~$\Herm$,
where~$\lambda(X) = (\lambda_{1}(X),\dots,\lambda_d(X))$ is the ordered spectrum of~$X$.
It is well-known (e.g.~\cite[Thm.~1.1]{lewis1996derivatives}) that the gradient of~$\phi \circ \lambda$ at~$X = Q \Diag(\lambda(X)) Q^\dagger$ reads
\[
\nabla [\phi \circ \lambda](X) = Q \Diag(\nabla \phi(\lambda(X))) Q^\dagger
\] 
where~$A^\dagger$ is the conjugate transpose of a matrix~$A$ and~$\Diag(\cdot)$ maps~$x \in \R^d$ to the diagonal~$d \times d$ matrix with~$x$ on the diagonal; 
in other words,~$\smash{\nabla [\phi \circ \lambda](X)}$ is a Hermitian matrix with the same eigenbasis as~$\smash{X}$, whose spectrum is given by the gradient of~$\phi$ evaluated at the spectrum of~$X$.
In particular, negative entropy~\eqref{eq:intro-entropy} gives the negative von Neumann entropy~$h(\lambda(X))=\tr(X\log X)$. 
It is also well-known (e.g.~\cite{baes2007convexity}) that strict convexity of~$\phi \circ \lambda$ is equivalent to that of~$\phi$; this allows to extend Bregman divergences to~$\Herm$ via
$D_{\phi \hspace{1pt}\circ \lambda}(X,U) := \phi(\lambda(X)) - \phi(\lambda(U)) - \langle \nabla [\phi \circ \lambda](U), X-U \rangle$. 
In particular, {\em (Umegaki's) quantum relative entropy} between~$X \in \Dens$ and~$U \in \ri(\Dens)$, as given by
\begin{equation}
\label{eq:qre-def}
\qre{X}{U}:=\tr(X(\log X-\log U)),
\end{equation}
extends KL divergence in the above sense~\cite{hiai1991proper} and can be further extended to~$\Dens \times \Dens$ as in~\eqref{eq:intro-kl}.
As such, the appropriate generalization of~$\Fh(L)$ is the class~$\Qd(L)$ of convex functions~$F: \Dens \to \R$ continuously differentiable on~$\ri(\Dens)$ and satisfying the following inequalities:
\begin{equation}
\label{eq:quantum-sandwich}
0\le F(X)-F(U)-\inner{\bnabla F(U)}{X-U} 
\le L\qre{X}{U} \qquad \forall (X,U) \in \Dens \times \ri(\Dens).
\end{equation}
Here, as in~\eqref{eq:intro-sandwich}, we let~$\bnabla F$ be the tangent gradient of~$F$ on~$\ri(\Dens)$, so that~$\bnabla F(U) \in \spanof{I_d}^\perp$. 

We can now state a noncommutative generalization of Theorem~\ref{th:intro-lower}.


\begin{theorem}
\label{th:quantum}
Let~$d,L,T$ be as in the premise of Theorem~\ref{th:lower}. 
For every deterministic method that makes~$T$ queries~$X_1, \dots, X_T \in \ri(\Dens)$ of the oracle~$X \mapsto (F(X),\bnabla F(X))$ and returns~$X_{T+1}\in\Dens$, there exists~$F(\cdot)\in\Qd(L)$ such that
\begin{equation}
\label{eq:quantum-bound}
F(X_{T+1})-\min_{X\in\Dens}F(X)>
\frac{L}{4(T+1)}.
\end{equation}
\end{theorem}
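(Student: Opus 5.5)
The plan is to reduce Theorem~\ref{th:quantum} to Theorem~\ref{th:lower} by embedding the commutative hard instance diagonally, as outlined in the introduction. For $f \in \Fh(L)$ we set $F(X) := f(\diag(X))$, where $\diag:\Herm\to\R^d$ extracts the diagonal. Note that $\diag$ maps $\Dens$ onto $\Simp$ and $\ri(\Dens)$ into $\ri(\Simp)$, since a positive definite matrix has positive diagonal. There are three steps: (i) show that $F\in\Qd(L)$; (ii) simulate any matrix method by a vector method; (iii) transfer the gap.

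\emph{Step (i).} Convexity of $F$ is immediate, since it is the composition of a convex function with a linear map. For $U\in\ri(\Dens)$, $F$ is $C^1$ with tangent gradient $\bnabla F(U)=\Diag(\bnabla f(\diag U))$. This matrix is traceless because $\bnabla f\in\Tan$, so it lies in $\spanof{I_d}^\perp$. Hence $\inner{\bnabla F(U)}{X-U}=\inner{\bnabla f(\diag U)}{\diag X-\diag U}$, and the left-hand side of \eqref{eq:quantum-sandwich} equals the left-hand side of \eqref{eq:intro-sandwich} at $(\diag X,\diag U)$. It therefore remains to show
\[
\Div(\diag X,\diag U)\le \qre{X}{U}.
\]
This is the data-processing (monotonicity) inequality of quantum relative entropy under the pinching channel $X\mapsto \Diag(\diag X)$, a completely positive trace-preserving map. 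That monotonicity is Lindblad's inequality~\cite{Lindblad1975}. I expect this step to be the main obstacle, mostly in the bookkeeping: invoking the correct form of Lindblad's result for the pinching, and checking that $X$ on the boundary with $U$ interior is covered. The latter follows by continuity, or directly from the extended-value convention.

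\emph{Step (ii).} Fix any deterministic matrix method $\Mtd'$ with maps $\Psi_t$. We build a vector method $\Mtd\in\FOM_d(T)$ by setting
\[
\Phi_t(\omega_1,\dots,\omega_{t-1}) := \diag\Psi_t(\tilde\omega_1,\dots,\tilde\omega_{t-1}), \qquad \tilde\omega_s := (f_s,\Diag(\xi_s)) \text{ for } \omega_s=(f_s,\xi_s).
\]
We define $\bar\Phi_{T+1}$ in the same way. The outputs land in $\ri(\Simp)$ and $\Simp$ respectively, so $\Mtd$ is admissible. By induction on $t$, exactly as in Proposition~\ref{prop:transcript}, the matrix transcript on $F$ satisfies $\diag X_t = x_t^{\Mtd}(f)$ for all $t\in[T+1]$. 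The induction works because the matrix oracle answer at $X_t$ is $(f(\diag X_t),\Diag(\bnabla f(\diag X_t)))$, which is precisely $\tilde\omega_t$ formed from the vector oracle answer at $x_t=\diag X_t$.

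\emph{Step (iii).} Apply Theorem~\ref{th:lower} to $\Mtd$. This yields $f\in\Fh(L)$ with $f(x_{T+1})-\min_{\Simp}f>L/(4(T+1))$. Then
\[
F(X_{T+1})=f(\diag X_{T+1})=f(x_{T+1}),
\]
and $\min_{\Dens}F=\min_{\Simp}f$ because $\diag(\Dens)=\Simp$: diagonal density matrices realize every point of the simplex. This gives \eqref{eq:quantum-bound}.
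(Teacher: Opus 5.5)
Your proposal is correct and follows the paper's own argument. The paper uses the same diagonal embedding $F = f\circ\diag$, the same membership proof for $\Qd(L)$ via the chain rule and Lindblad's data-processing inequality for the pinching, and the same emulation of the matrix method by a vector method whose transcript is the diagonal of the matrix transcript. Your explicit choice $\Phi_t := \diag\Psi_t(\tilde\omega_1,\dots,\tilde\omega_{t-1})$ is slightly more careful than the paper's Appendix~\ref{app:quantum-proof}, which leaves out the outer $\diag$ when defining the emulating maps.
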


\begin{proof}[Proof sketch]
\proofstep{1}. First of all, we observe that the function~$F: \Dens \to \R$ given by~$F(X) = f(\diag(X))$, where~$f$ is the hard instance~$f \in \Ent(L)$ from Theorem~\ref{th:lower} and~$\diag(X) = (X_{11}, \dots, X_{dd})$, belongs to the class~$\Qd(L)$. 
Indeed, convexity of~$F$ follows from that of~$f$, since~$\diag(\cdot)$ is a linear mapping.
On the other hand, defining the diagonal truncation operator~$\Pin(H) := \Diag(\diag(H))$, we have
\begin{equation}
\label{eq:dpi}
\Div(\diag(X),\diag(U))
= \qre{\Pin(X)}{\Pin(U)}
\le\qre{X}{U} \qquad \forall (X,U) \in \Dens \times \ri(\Dens).
\end{equation}
Here, the identity holds by~\eqref{eq:qre-def} and~\eqref{eq:intro-kl}; the final estimate is by Lindblad's data processing inequality~\cite{Lindblad1975,Uhlmann1977}.  
Meanwhile, by the chain rule~$\bnabla F(U)$ is a diagonal matrix with~$\bnabla f(\diag(U))$ on the main diagonal; as a result, we have the following in terms of~$x = \diag(X)$ and~$u = \diag(U)$,
\begin{align*}
F(X)-F(U)-\inner{\bnabla F(U)}{X-U} 
=
f(x)-f(u)-\inner{\bnabla f(u)}{x-u}  
\le L\Div(x,u)
\le L\qre{X}{U},
\end{align*}
as required in~\eqref{eq:quantum-sandwich}. 
As such,~$F(\cdot) = f(\diag(\cdot))$ indeed belongs to~$\Qd(L)$ whenever~$f \in \Fh(L)$. 

\proofstep{2}. To complete the proof, we note that a method~$\tilde \Mtd$ with queries~$X_t \mapsto (F(X_t),\bnabla F(X_t))$, when run on~$F(\cdot) = f(\diag(\cdot))$, receives answers~$(f(x_t),\Diag(\bnabla f(x_t)))$ where~$x_t = \diag(X_t)$. 
Since the matrix~$\Diag(\bnabla f(x_t))$ contains the same information as~$\bnabla f(x_t)$, the sequence
$(x_t,f(x_t),\bnabla f(x_t))_{t \in [T]}$ 
is the transcript of {\em some}~$\smash{\Mtd \in \FOM_d(T)}$ run on~$\smash{f \in \Fh(L)}$ and returning~$x_{T+1}$.
It remains to pick~$f$ as in Theorem~\ref{th:lower} and combine~\eqref{eq:intro-lower} with the fact that~$\smash{\min_{X \in \Dens} f(\diag(X)) = \min_{x \in \Simp} f(x)}$.
\end{proof}
In Appendix~\ref{app:quantum-proof} we give an explicit minimax formulation of Theorem~\ref{th:quantum} analogous to Theorem~\ref{th:lower}, together with the full presentation of the emulation argument in step \proofstep{2} of the above proof sketch.

%
\section{Pointwise minimal interpolant}
\label{sec:outro}

A promising approach towards improving Theorem~\ref{th:intro-lower} would adapt the primal-dual estimate function (PDEF) framework of Florea and Nesterov~\cite{florea2025optimal}
to the class~$\Fh(L)$. 
This framework is built around the {\em optimal interpolating lower model}, defined as the pointwise-minimal function in the class of convex functions with~$L$-Lipschitz gradient (w.r.t.~$\|\cdot\|_2$), that interpolates the given data~$\smash{\{(x_s,f_s,\xi_s)\}_{s \in [t]}}$. The idea is that such a model can be updated incrementally: the next primal point~$x_{t+1}$ is selected by minimizing the current model~$\hat f_t$; a resisting oracle then selects an answer~$(f_{t+1}, \xi_{t+1})$ compatible with~$\hat f_t$, and the model is updated to incorporate the new point. 
Here, we make the first step towards implementing this program, by deriving an explicit form of the optimal interpolating lower model for the class~$\Fh(L)$, as a consequence of the results of Dragomir~\cite[Chap.~5]{DragomirThesis2021}. 
We note that the remaining step
would be to construct a resisting oracle that gives the tightest final lower bound.

Given the data~$\cI=\{(x_i,f_i,\xi_i)\}_{i \in [N]}$ with~$(x_i,f_i,\xi_i) \in \ri(\Simp) \times \R \times \Tan$, we define the quantities
\begin{equation}
\label{eq:interpol-quantities}
r_i := f_i - \langle \xi_i, x_i \rangle,
\qquad
w_i := \exp({L^{-1}}{(r_i \ones + \xi_i)}), 
\qquad
W_{\cI} := \conv\{w_1, \dots, w_N\}.
\end{equation}
(In the sequel,~$\exp(\cdot)$,~$\log(\cdot)$ and division are entrywise.)
The next lemma, derived from~\cite[Thm.~5.11]{DragomirThesis2021} and proved in Appendix~\ref{app:interpol}, provides explicit interpolation conditions for the class~$\Fh(L)$.

\renewcommand{\ip}[2]{\langle #1, #2 \rangle}

\begin{lemma}
\label{lem:entropy-interpolation}
The existence of~$f \in \Fh(L)$ that satisfies~$f(x_i) = f_i$ and~$\bnabla f(x_i) = \xi_i$ for all~$i \in [N]$ is equivalent to the following inequalities in terms of~\eqref{eq:interpol-quantities}:
\begin{equation}
\label{eq:entropy-interpolation}
 \ip{x_i}{w_j/w_i}\le 1 \qquad \forall i,j \in [N].
\end{equation}
\end{lemma}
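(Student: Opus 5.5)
Since $f\in\Fh(L)$ means exactly that $f$ and $Lh-f$ are convex, I would pass to the convex conjugate of $Lh-f$, which turns interpolation into a problem about convex functions with a simple dual structure. I would adapt Dragomir's Theorem~5.11 (stated on $\R^d_+$ for the unnormalized entropy $h_\e$) to the simplex. The cleanest route is to extend $f$ $1$-homogeneously to $\R^d_+$, or equivalently to work with the conjugate of $h$ restricted to $\Simp$, which is $\log\langle \ones,\e^{y}\rangle$. The data then map as follows. The tangent gradient $\xi_i$ is defined only modulo $\ones$. Moreover, the affine minorant $x\mapsto f_i+\langle\xi_i,x-x_i\rangle = r_i+\langle \xi_i,x\rangle$ on $\Simp$ equals $\langle r_i\ones+\xi_i,x\rangle$, because $\langle\ones,x\rangle=1$. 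This explains why $w_i=\exp(L^{-1}(r_i\ones+\xi_i))$ is the natural normalized dual object.

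\emph{Necessity.} Fix $f\in\Fh(L)$ interpolating $\cI$ and define $\psi:=Lh-f$, which is convex on $\Simp$. I would evaluate the relative smoothness inequality $f(x)\le f(x_j)+\langle\xi_j,x-x_j\rangle+L\Div(x,x_j)$ and the convexity inequality $f(x)\ge r_i+\langle\xi_i,x\rangle$, and combine them into
\[
r_j+\langle \xi_j,x\rangle+L\Div(x,x_j)\ \ge\ r_i+\langle\xi_i,x\rangle\qquad \forall x\in\Simp.
\]
Minimizing the left minus right over $x\in\Simp$ is a Gibbs variational problem. Since $\min_x\{\langle c,x\rangle+L\Div(x,x_j)\}=-L\log\langle x_j,\e^{-c/L}\rangle$, the resulting inequality is $\langle x_j, w_i/w_j\rangle\le 1$. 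After relabeling, this is~\eqref{eq:entropy-interpolation}. I still need to check that the diagonal case $i=j$ is consistent, i.e.\ that $\langle x_i,\ones\rangle=1$ holds. This direction is routine.

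\emph{Sufficiency.} Given~\eqref{eq:entropy-interpolation}, I would propose the explicit candidate
\[
f(x):=\max_{i}\Bigl\{r_i+\langle\xi_i,x\rangle\Bigr\}\ \text{smoothed},\qquad\text{or rather}\qquad Lh-f = \text{conjugate-type envelope of } W_{\cI}.
\]
Concretely, I set $f(x):=\min_{w\in W_\cI}\bigl\{L\langle x,\log w\rangle + L\Div(x,\,\cdot)\text{-correction}\bigr\}$. Equivalently, I define $f(x):=L h(x)-L\max_{w\in W_\cI}\{\langle x,\log(\cdot)\rangle\ldots\}$ using Dragomir's dual formula, namely $Lh - f = L\,\sigma(x)$ with $\sigma(x)=\sup_{w\in W_{\cI}}\ldots$ chosen so that $f$ is convex by joint convexity of KL. The checks are the following. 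Convexity of $f$ and of $Lh-f$ follow from joint convexity of the perspective $x\log(x/w)$ in $(x,w)$, together with a partial minimization over the convex set $W_\cI$. Next, $f(x_i)=f_i$ requires that the optimum at $x_i$ is attained at $w=w_i$, and I expect this to be exactly where~\eqref{eq:entropy-interpolation} enters through first-order optimality over $\conv$. Finally, $\bnabla f(x_i)=\xi_i$ follows by Danskin's theorem once the optimizer is $w_i$.

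The main obstacle is transporting Dragomir's orthant statement to the simplex. The normalization ($r_i\ones$ absorbing the normal gradient component) and boundary behaviour need care, and I would first try the $1$-homogeneous extension of Appendix~\ref{app:conic} so that Theorem~5.11 applies verbatim and the homogeneity collapses the conditions to~\eqref{eq:entropy-interpolation}.
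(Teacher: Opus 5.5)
Your necessity argument is correct and complete, and it is more elementary than the paper's. You combine the affine minorant $r_i+\ip{\xi_i}{x}\le f(x)$ with the relative-smoothness majorant at $x_j$ and minimize over $x\in\Simp$ by the Gibbs formula, which gives exactly $\ip{x_j}{w_i/w_j}\le 1$. The paper instead lifts $\xi_i$ to $\tilde\xi_i=\xi_i+r_i\ones$, passes to the $1$-homogeneous extension $f^+\in\Fh^+(L)$, and invokes Dragomir's Theorem~5.11 on $\R^d_+$ in both directions. Your computation is in effect the simplex version of the necessary half of that theorem. The case $i=j$ needs no check, since it reads $\ip{x_i}{\ones}=1$.

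The gap is sufficiency, which is where the real content lies. You never actually define an interpolant. The displayed candidates contain placeholders (``$L\Div(x,\cdot)$-correction'', ``$\ldots$''), and the one definite structural choice, a supremum over $W_{\cI}$ on the $Lh-f$ side, points the wrong way. It is $f$ itself that should be the maximum: $f_{\cI}(x):=L\max_{w\in W_{\cI}}\ip{x}{\log w}$, equivalently $Lh-f_{\cI}=L\min_{w\in W_{\cI}}\sum_k (x)_k\log\frac{(x)_k}{(w)_k}$. The minimum is what makes your partial-minimization argument for convexity of $Lh-f$ apply, and $f_{\cI}$ is convex as a maximum of linear functions.

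The decisive step, that the optimum at $x_i$ is $w_i$, you only ``expect''. It goes as follows. Since $w\mapsto\ip{x_i}{w/w_i}$ is linear, \eqref{eq:entropy-interpolation} extends from the vertices $w_j$ to all $w\in W_{\cI}$. Jensen's inequality then gives $\ip{x_i}{\log(w/w_i)}\le\log\ip{x_i}{w/w_i}\le 0$, so $w_i$ is the maximizer, unique by strict concavity because $x_i\in\ri(\Simp)$. Hence $f_{\cI}(x_i)=r_i+\ip{\xi_i}{x_i}=f_i$, and Danskin's theorem gives $\bnabla f_{\cI}(x_i)=L\ProjTan(\log w_i)=\xi_i$. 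This is precisely your ``first-order optimality over $\conv$'': the condition $\ip{x_i/w_i}{w-w_i}\le 0$ on $W_{\cI}$ is exactly \eqref{eq:entropy-interpolation}.

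These are steps 1--2 of Proposition~\ref{prop:log-hull}. With them, your argument becomes a self-contained proof that bypasses Dragomir's theorem altogether. If you instead take your fallback through $f^+$, that is the paper's route, but you must carry out the reduction you only allude to:
\begin{itemize}
\item $f$ interpolates $\cI$ iff $f^+$ interpolates $(x_i,f_i,\tilde\xi_i)$, by \eqref{eq:grad-conic};
\item the restriction to $\Simp$ of any $\tilde f\in\Fh^+(L)$ lies in $\Fh(L)$, with $\bnabla\tilde f(x_i)=\ProjTan(\tilde\xi_i)=\xi_i$;
\item expanding $D_{h_{\e}^*}$ with $h_{\e}^*(\cdot)=\ip{\ones}{\exp(\cdot)}$ shows that Dragomir's condition collapses to \eqref{eq:entropy-interpolation}.
\end{itemize}
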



Using these interpolation conditions, we now derive the pointwise minimal interpolant in~$\Fh(L)$.

\begin{proposition}
\label{prop:log-hull}
Assume~$\cI = \{(x_i,f_i,\xi_i)\}_{i \in [N]}$ satisfies~\eqref{eq:entropy-interpolation}.
The function~$f_{\cI}: \Simp \to \R$ defined by
\[
 f_{\cI}(x):=L\max_{w\in W_{\cI}}\ip{x}{\log w}   
\]
belongs to~$\Fh(L)$, interpolates~$\cI$, 
and is pointwise minimal among all functions with these properties.
\end{proposition}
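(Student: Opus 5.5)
The plan is to rewrite $f_{\cI}$ in a form where all four properties (membership in $\Fh(L)$, interpolation, pointwise minimality, differentiability) become transparent. Two observations drive everything. First, since $\langle x,\ones\rangle = 1$ on $\Simp$, each vertex term is the affine model at $x_i$:
\[
L\langle x,\log w_i\rangle = r_i + \langle \xi_i, x\rangle = f_i + \langle \xi_i, x - x_i\rangle .
\]
Second, for $w\in W_{\cI}$, which is a compact subset of $\R^d_{++}$, we have
\[
\langle x,\log w\rangle = h(x) - \sum_k (x)_k\log\tfrac{(x)_k}{(w)_k} =: h(x) - D(x,w),
\]
where $D$ is the unnormalized KL divergence, jointly convex on $\R^d_+\times\R^d_{++}$ by the perspective rule used in Lemma~\ref{lem:envelope-regularity}. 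Hence
\[
f_{\cI}(x) = Lh(x) - L\min_{w\in W_{\cI}} D(x,w).
\]

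\emph{Step 1: $f_{\cI}\in\Fh(L)$.} Convexity holds because $f_{\cI}$ is a maximum of linear functions of $x$. Relative smoothness is equivalent to convexity of $Lh - f_{\cI} = L\min_{w\in W_{\cI}}D(\cdot,w)$, and this follows from joint convexity of $D$, convexity of $W_{\cI}$, and the partial minimization rule. Finiteness and continuity on all of $\Simp$ are clear, since $\log w$ is bounded on the compact set $W_{\cI}$.

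For $C^1$ on $\ri(\Simp)$: when $x>0$, the map $w\mapsto D(x,w)$ is strictly convex, so the maximizer $w^\star(x)$ is unique. Its continuity follows from the same Rockafellar--Wets argument as in Lemma~\ref{lem:prox-mapping}. Danskin's theorem then gives $\bnabla f_{\cI}(x) = L\,\Pi_{\Tan}\log w^\star(x)$, which is continuous.

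\emph{Step 2: interpolation.} Fix $i$. The condition \eqref{eq:entropy-interpolation} is linear in $w_j$, so it extends to $\langle x_i, w/w_i\rangle\le 1$ for every $w\in W_{\cI}$. By Jensen's inequality with weights $x_i\in\Simp$,
\[
\langle x_i,\log(w/w_i)\rangle \le \log\langle x_i,w/w_i\rangle \le 0 .
\]
Hence the maximum at $x_i$ is attained at $w_i$, and by uniqueness $w^\star(x_i)=w_i$. Therefore $f_{\cI}(x_i) = L\langle x_i,\log w_i\rangle = f_i$. Moreover
\[
\bnabla f_{\cI}(x_i) = \Pi_{\Tan}(r_i\ones + \xi_i) = \xi_i,
\]
since $\xi_i\in\Tan$.

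\emph{Step 3: minimality.} Let $f\in\Fh(L)$ interpolate $\cI$, and fix $x\in\ri(\Simp)$. The augmented data $\cI\cup\{(x,f(x),\bnabla f(x))\}$ is interpolated by $f$, so Lemma~\ref{lem:entropy-interpolation} applies to it. Let $w_x$ denote the quantity \eqref{eq:interpol-quantities} for the new point. Lemma~\ref{lem:entropy-interpolation} gives $\langle x, w_j/w_x\rangle\le 1$ for all $j$, and by linearity this holds for all $w\in W_{\cI}$. The same Jensen step as in Step 2 then yields
\[
L\langle x,\log w\rangle \le L\langle x,\log w_x\rangle = f(x)
\]
for all $w\in W_{\cI}$, that is, $f_{\cI}(x)\le f(x)$. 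Boundary points follow by continuity of both functions on $\Simp$.

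I expect the main obstacle to be the regularity part of Step 1: making the Danskin/uniqueness argument rigorous on $\ri(\Simp)$ and correctly projecting onto $\Tan$. The identification of $f_{\cI}$ as $Lh$ minus a partially minimized jointly convex divergence should make the remaining steps routine.
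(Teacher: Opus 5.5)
Your proposal is correct and follows essentially the same route as the paper. The shared steps are the rewriting $Lh - f_{\cI} = L\min_{w\in W_{\cI}}\sum_k (x)_k\log\frac{(x)_k}{(w)_k}$ with joint convexity and Danskin's theorem, the identical Jensen argument at $x_i$, and the same application of Lemma~\ref{lem:entropy-interpolation} to the augmented data; your $\langle x, w_j/w_x\rangle\le 1$ is exactly the paper's inequality~\eqref{eq:interpol-condition-old-to-new}. The only minor difference is at boundary points. You appeal to continuity of $f$ on the closed simplex, whereas the paper uses only continuity of $f_{\cI}$ and convexity of $f$ along the segment $(1-t)\bar x + ty$, which avoids relying on boundary continuity of an arbitrary member of $\Fh(L)$.
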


\begin{proof}
\proofstep{1}. Let us verify that~$f_{\cI} \in \Fh(L)$. 
The function~$f_{\cI}$ is convex, and one has
\[
Lh(x)-f_{\cI}(x)=L\min_{w\in W_{\cI}} \sum_{k \in [d]} (x)_k \log\frac{(x)_k}{(w)_k}.
\]
The summand is jointly convex in~$((x)_k,(w)_k)$ as the perspective of a convex function~$-\log(\cdot)$, so~$Lh-f_{\cI}$ is convex. 
For~$x\in\ri(\Simp)$, strict concavity of~$w\mapsto\ip{x}{\log w}$ on~$W_{\cI}$ gives a unique maximizer~$w(x)$ that continuously depends on~$x$. Whence by  Danskin's theorem,~$f_{\cI} \in C^1(\ri(\Simp))$ with
$
\bnabla f_{\cI}(x)=L\ProjTan (\log w(x)),
$
and therefore~$f_{\cI}\in\Fh(L)$.

\proofstep{2}. 
We show that~$f_{\cI}$ interpolates~$\cI$. 
For all~$w \in W_{\cI}$, we have~$\ip{x_i}{w/w_i} \le 1$  by~\eqref{eq:entropy-interpolation}, whence by concavity of~$\log(\cdot)$,
\[
 \ip{x_i}{\log(w/w_i)}\le\log\ip{x_i}{w/w_i} \le 0.
\]
Thus~$w(x_i) = w_i$, therefore
$f_{\cI}(x_i)=L\ip{x_i}{\log w_i}=f_i$
and
$
\bnabla f_{\cI}(x_i)=L\ProjTan(\log w_i)=\xi_i,
$
cf.~\eqref{eq:interpol-quantities}.

\proofstep{3}.
Finally, consider arbitrary~$F\in\Fh(L)$ that interpolates~$\cI$, i.e.~$F(x_i)=f_i$ and~$\bnabla F(x_i)=\xi_i$ for all~$i\in[N]$. 
Fix~$x\in\ri(\Simp)$, and set~$\xi:=\bnabla F(x)$ and~$r:=F(x)-\ip{\xi}{x}$. Applying Lemma~\ref{lem:entropy-interpolation} to the interpolation data~$\cI \cup \{(x,F(x),\xi)\}$ we get
\begin{equation}
\label{eq:interpol-condition-old-to-new}
 \exp({L^{-1}}{r_i})
 \ip{x}{\exp({L^{-1}}{(\xi_i-\xi)})}
 \le \exp({L^{-1}}{r})
 \qquad \forall i \in[N]
\end{equation}
where~$r = F-\langle\xi,x\rangle$.
As a result, for~$w=\sum_{i \in [N]} \lambda_i w_i\in W_{\cI}$ we get,  by using the concavity of~$\log(\cdot)$, 
\begin{align*}
L\ip{x}{\log w}
 &\le \ip{\xi}{x}
 +L\log\Bigg(\sum_{i \in [N]} \lambda_i \exp({L^{-1}}{r_i})
 \ip{x}{\exp({L^{-1}}{(\xi_i-\xi)})}\Bigg)
\stackrel{\eqref{eq:interpol-condition-old-to-new}}{\le} \ip{\xi}{x}+r 
 = F(x).
\end{align*}
Maximizing over~$w$ we get~$f_{\cI}(x)\le F(x)$ on~$\ri(\Simp)$. 
For~$\bar x$ on the relative boundary of~$\Simp$, consider the segment~$x(t) = (1-t)\bar x + t y$ with~$y \in \ri(\Simp)$. Then by continuity of~$f_{\cI}$ and convexity of~$F$ we get~$f_{\cI}(\bar x)=\lim_{t \downarrow 0}f_{\cI}(x(t))\le\limsup_{t\downarrow0}F(x(t))\le \limsup_{t \downarrow 0} (1-t) F(\bar x) + t F(y) = F(\bar x)$.
\end{proof}
\section*{Acknowledgments}

J.~M.~Aguirre is supported by the NSF Graduate Research Fellowship under Grant No.~DGE-2039655.
D.~M.~Ostrovskii thanks Radu-Alexandru Dragomir, Adrien Taylor, and Alexandre d'Aspremont for interesting discussions pertaining to this problem.
\appendix
\section{Objective extension to~$\R^d_+$ with full-gradient oracle access}
\label{app:conic}

\paragraph{Tangent gradients and~$1$-homogeneous extension over~$\R^d_+$.}
Recall that any function~$f$ defined on~$\Simp$ and differentiable in~$\ri(\Simp)$---including the hard instances presented in Section~\ref{sec:hard-subclass}---admits the~1-homogeneous extension on~$\R^d_+$ (see, e.g.,~\cite{ovcharov2018proper}), defined as follows: setting~$s(x) := \ip{\ones}{x}$,
\begin{equation}
\label{eq:conic-extension}
f^+(x) = s(x) f (s(x)^{-1}x).
\end{equation}
This function is differentiable in~$\R^d_{++}$. 
An explicit calculation gives its full gradient for~$x \in \ri(\Simp)$:
\begin{equation}
\label{eq:grad-conic}
\nabla_\star f^+(x) = \bnabla f(x) + (f(x) - \langle \bnabla f(x), x \rangle) \ones.
\end{equation}
Thus, the oracle~$\Orc_f(x) = (f(x),\bnabla f(x))$ is at least as strong as the full gradient oracle~$x \mapsto \nabla_\star f^+(x)$ in the extended class~$\{\smash{f^+: f \in \Fh(L)}\}$, in the sense that one can emulate the latter by using the former's answer at the same query point.
Intuitively, this should imply that access to the full gradient of~$\smash{f^+}$ {\em cannot} lead to a faster convergence rate than access to the oracle~$\smash{\Orc_f}$, as~$\smash{\Orc_f}$ gives at least as much information as~$\nabla_\star f^+$.
The result we shall present next formalizes this comparison and leverages Theorem~\ref{th:lower} to obtain a lower bound for the class of functions on the positive orthant, convex and smooth relative to the unnormalized negative entropy
$
\smash{h_\e(x) := \sum_{k \in [d]} (x)_k \log(x)_k-(x)_k}
$
on~$\R^d_+$. 

Define~$\smash{\Ent_{d}^+(L)}$ as the class of functions~$\smash{\tilde f: \R^d_+ \to \R}$ that are convex,~$\smash{C^1(\R^d_{++})}$, and such that 
\begin{equation}
\label{eq:sandwich-conic}
\tilde f(x)- \tilde f(u)-\inner{\nabla_\star \tilde f(u)}{x-u} \le LD_{h_\e}(x,u) \quad \forall (x,u) \in \R^d_+ \times  \R^d_{++}.
\end{equation}
Since~$\smash{\inner{\nabla_\star \tilde f(x)}{v} = \inner{\bnabla \tilde f(x)}{v}}$ for all tangent directions~$v \in \Tan$, and~$h_e(x) = h(x)-1$ for all~$x \in \Simp$, the restriction of~$\tilde f \in \smash{\Fh^+(L)}$ to~$\Simp$ belongs to~$\Fh(L)$. The next lemma gives a partial converse.
\begin{lemma}
\label{lem:conic-extension}
The 1-homogeneous extension $f^+(x)$ of any~$f \in \Fh(L)$, cf.~\eqref{eq:conic-extension}, belongs to~$\Fh^+(L)$.
\end{lemma}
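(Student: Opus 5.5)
We need to show that $f^+$ is convex and $C^1$ on $\R^d_{++}$, and that $f^+(x)-f^+(u)-\langle\nabla_\star f^+(u),x-u\rangle\le LD_{h_\e}(x,u)$ for all $(x,u)\in\R^d_+\times\R^d_{++}$.

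Convexity and differentiability are routine. $f^+$ is the perspective of $f$ restricted to the cone, so it is convex; at the origin we set $f^+(0)=0$, which is consistent with boundedness of $f$. Differentiability on $\R^d_{++}$ follows from the chain rule applied to $x\mapsto s(x)f(x/s(x))$. For the smoothness inequality, write $x=\alpha\bar x$ and $u=\beta\bar u$ with $\alpha=s(x)$, $\beta=s(u)$, $\bar x\in\Simp$, $\bar u\in\ri(\Simp)$. By $1$-homogeneity, $\nabla_\star f^+$ is $0$-homogeneous, so $\nabla_\star f^+(u)=\nabla_\star f^+(\bar u)$. By Euler's identity, $\langle\nabla_\star f^+(u),u\rangle=f^+(u)$. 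Hence the left-hand side reduces to
\[
\alpha f(\bar x)-\alpha\langle\nabla_\star f^+(\bar u),\bar x\rangle .
\]
Plugging in \eqref{eq:grad-conic}, this equals
\[
\alpha\bigl[f(\bar x)-f(\bar u)-\langle\bnabla f(\bar u),\bar x-\bar u\rangle\bigr],
\]
using $\langle\ones,\bar x\rangle=1$. By the definition of $\Fh(L)$, this is at most $\alpha L\,\Div(\bar x,\bar u)$.

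It remains to compare $\alpha\Div(\bar x,\bar u)$ with $D_{h_\e}(x,u)$. The generalized KL divergence is
\[
D_{h_\e}(x,u)=\sum_k (x)_k\log\frac{(x)_k}{(u)_k}-\alpha+\beta .
\]
Substituting $(x)_k=\alpha(\bar x)_k$ and $(u)_k=\beta(\bar u)_k$ gives
\[
D_{h_\e}(x,u)=\alpha\Div(\bar x,\bar u)+\alpha\log(\alpha/\beta)-\alpha+\beta .
\]
The extra term $\beta\,\varphi(\alpha/\beta)$, with $\varphi(t)=t\log t-t+1\ge0$, is nonnegative. Therefore $\alpha\Div(\bar x,\bar u)\le D_{h_\e}(x,u)$, which completes the inequality. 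The case $x$ with zero entries is covered by the extended-value conventions, and the case $x=0$ is trivial since both sides are handled by $\alpha=0$ (the right side becomes $\beta\ge0$).

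The main obstacle is bookkeeping the gradient formula off the simplex. One must verify \eqref{eq:grad-conic} at $\bar u$ and invoke $0$-homogeneity correctly, or equivalently compute $\nabla_\star f^+(u)=\bnabla f(\bar u)+(f(\bar u)-\langle\bnabla f(\bar u),\bar u\rangle)\ones$ directly. It also requires justifying the lower semicontinuity and boundary conventions in \eqref{eq:sandwich-conic} when $x$ lies on the boundary of the orthant.
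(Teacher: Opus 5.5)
Your proof is correct, but it argues at a different level than the paper's.

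The paper works with potentials. It writes $Lh_\e(x)-f^+(x)=s(x)\,(Lh-f)(x/s(x))+L\bigl(s(x)\log s(x)-s(x)\bigr)$. It then concludes that $Lh_\e-f^+$ is convex: the first term is the perspective of the convex function $Lh-f$ composed with a linear map, and the second is a convex function of $s(x)$. Convexity of $Lh_\e-f^+$ is equivalent to \eqref{eq:sandwich-conic}.

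You instead verify the inequality directly at the level of Bregman divergences. Euler's identity and $0$-homogeneity of $\nabla_\star f^+$ give $f^+(x)-f^+(u)-\langle\nabla_\star f^+(u),x-u\rangle=\alpha\bigl[f(\bar x)-f(\bar u)-\langle\bnabla f(\bar u),\bar x-\bar u\rangle\bigr]$. The decomposition $D_{h_\e}(x,u)=\alpha\,\Div(\bar x,\bar u)+\beta\,\varphi(\alpha/\beta)$ then closes the argument.

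These two identities are exactly the Bregman divergences of the two terms in the paper's decomposition, so the mechanism is the same. Your route makes the slack explicit: it is the scalar divergence $\beta\varphi(\alpha/\beta)$ between the total masses. The paper's route is shorter and needs only the fact that perspectives preserve convexity.

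Two minor points. The boundary concerns in your last paragraph are moot: since $u\in\R^d_{++}$, every divergence involved is finite, and the defining inequality of $\Fh(L)$ already covers $\bar x$ on the relative boundary of $\Simp$. Also, for $x=0$ the right-hand side is $L\beta$ rather than $\beta$, which is harmless.
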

\begin{proof}
$f^+$ is convex as the composition of the perspective transformation of~$f$ and a linear mapping. 
Clearly,~$f^+ \in C^1(\R^d_{++})$. 
Finally, by writing 
\[
Lh_\e(x)-f^+(x)
 =s(x) (Lh(y)-f(y))\big|_{y = s(x)^{-1}x} +Ls(x)\log s(x)-L s(x)
\]
we see that~$Lh_\e - f^+$ is convex, which is equivalent to~\eqref{eq:sandwich-conic}. 
Indeed, the first term is convex as the composition of the perspective transformation of~$Lh-f$ (which is convex) and a linear mapping.
\end{proof}
%

Similarly, we can extend the class of first-order methods, by considering collections of mappings
\begin{equation}
\label{eq:method-maps-conic}
\begin{aligned}
\Psi_t: \R^{d(t-1)}  &\to \ri(\Simp), \qquad t \in [T];\\
\bar  \Psi_{T+1}: \R^{dT} &\to \Simp,
\end{aligned}
\end{equation}
where~$\R^d$ replaces the oracle answer space~$\Omega = \R \times \Tan$ in~\eqref{eq:method-maps}; 
this corresponds to minimizing~$\tilde f$ over~$\Simp$.
Let~$\FOM_d^+(T)$ be the class of all methods defined by~\eqref{eq:method-maps-conic} and define the minimax risk
\begin{equation}
\label{eq:minimax-risk-conic}
\Risk_{d}^+(T,L) \;\; 
:=
\inf_{\tilde\Mtd^{\vphantom{{{2^2}^2}}} \,\in\, \FOM_{d}^+(T)} 
\quad
\sup_{\tilde f \,\in\, \Fh^+(L)} 
\quad
\left\{ \tilde f(x_{T+1}^{\tilde \Mtd}(\tilde f))- \min_{x \in \Simp} \tilde f(x) \right\},
\end{equation}
where the iterate sequence is generated by running~$\tilde \Mtd$ with the full gradient oracle~$x \mapsto \nabla \tilde f(x)$, i.e.
\begin{equation}
\label{eq:method-iterates-conic}
x_1^{\tilde\Mtd} = \Psi_1^{\vphantom{\tilde\Mtd}}, 
\;\;
x_2^{\tilde\Mtd}(\tilde f) = \Psi_2^{\vphantom{\tilde\Mtd}}(\nabla \tilde f(x_1^{\tilde\Mtd})), 
\;\; \ldots, \;\; 
x_{T+1}^{\tilde\Mtd}(\tilde f) = \bar\Psi_{T+1}^{\vphantom{\tilde\Mtd}}(\,\nabla \tilde f(x_1^{\tilde\Mtd}),\; \dots,\; 
\nabla \tilde f(x_{T}^{\tilde\Mtd}(\tilde f))).
\end{equation}
%
%
\begin{proposition}
\label{prop:emulation}
The minimax risks defined in~\eqref{eq:minimax-risk-conic} and~\eqref{eq:minimax-risk} satisfy
$
\Risk_{d}^+(T,L) \ge \Risk_{d}(T,L).
$
\end{proposition}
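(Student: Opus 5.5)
The plan is an emulation argument. Fix an arbitrary $\tilde\Mtd \in \FOM_d^+(T)$ with maps $(\Psi_t)_{t\in[T]}$ and $\bar\Psi_{T+1}$. From it we build a method $\Mtd \in \FOM_d(T)$ that uses only the tangent oracle $\Orc_f$ but reproduces the iterates of $\tilde\Mtd$ run on $f^+$. Since answers are pairs $\omega=(\phi,\xi)\in\Omega=\R\times\Tan$, define the map
\[
\Gamma:\Omega\to\R^d,\qquad \Gamma(\phi,\xi) := \xi + \bigl(\phi - \langle \xi, x\rangle\bigr)\ones .
\]
By \eqref{eq:grad-conic}, $\Gamma$ applied to $\Orc_f(x)$ gives $\nabla_\star f^+(x)$ for every $x\in\ri(\Simp)$. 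Note that $\Gamma$ involves the query point $x$. This is harmless, because under a deterministic method each query point is itself a function of the previous answers.

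The method $\Mtd$ is defined recursively. Set $\Phi_1 := \Psi_1$. For $t\ge 2$, let $\Phi_t(\omega_1,\dots,\omega_{t-1})$ first recompute $x_1,\dots,x_{t-1}$ recursively from the previously defined $\Phi_s$. It then returns
\[
\Psi_t\bigl(\Gamma_{x_1}(\omega_1),\dots,\Gamma_{x_{t-1}}(\omega_{t-1})\bigr),
\]
where $\Gamma_{x}$ denotes the map above with query point $x$. Define $\bar\Phi_{T+1}$ in the same way using $\bar\Psi_{T+1}$. These are legitimate maps of the form \eqref{eq:method-maps}, and their ranges lie in $\ri(\Simp)$ and $\Simp$ respectively, as required.

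Next, fix any $f\in\Fh(L)$. An induction on $t$ shows that $x_t^{\Mtd}(f) = x_t^{\tilde\Mtd}(f^+)$ for all $t\in[T+1]$. The base case holds because $\Phi_1=\Psi_1$. The inductive step uses \eqref{eq:grad-conic} at the common query point $x_t\in\ri(\Simp)$. Moreover, $f^+=f$ on $\Simp$, since $s(x)=1$ there. Hence the suboptimality of $\Mtd$ on $f$ equals the suboptimality of $\tilde\Mtd$ on $f^+$, with the minimum in both cases taken over $\Simp$.

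By Lemma~\ref{lem:conic-extension}, $f^+\in\Fh^+(L)$. Therefore
\[
\sup_{\tilde f\in\Fh^+(L)}\{\cdots\} \;\ge\; \sup_{f\in\Fh(L)} \bigl\{f(x_{T+1}^{\Mtd}(f))-\min_{\Simp} f\bigr\} \;\ge\; \Risk_d(T,L).
\]
Taking the infimum over $\tilde\Mtd$ gives the claim.

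The step that needs the most care is the well-definedness of the emulating maps, given that $\Gamma$ depends on the query point. The recursive definition above handles this, because every $x_s$ is determined by $\omega_1,\dots,\omega_{s-1}$. No measurability issues arise, since the classes $\FOM_d(T)$ and $\FOM_d^+(T)$ place no regularity requirements on their maps.
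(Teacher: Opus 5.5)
Your proposal is correct and follows essentially the same route as the paper. You restrict to the subclass $\{f^+ : f\in\Fh(L)\}$ via Lemma~\ref{lem:conic-extension}, use $f^+=f$ on $\Simp$, and emulate $\tilde\Mtd$ by a method in $\FOM_d(T)$ that converts each answer $(v,\xi)$ at the recursively recomputed query point $x$ into $\xi+(v-\langle\xi,x\rangle)\ones=\nabla_\star f^+(x)$ by~\eqref{eq:grad-conic}, checking consistency by induction; your recursive definition of $\Phi_t$ is exactly the nested construction the paper writes out.
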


\begin{proof}
Replacing~$\Ent_d^+(L)$ with the smaller (due to Lemma~\ref{lem:conic-extension}) class~$\{f^+: f \in \Fh(L)\}$, we get
\[
\begin{aligned}
\Risk_{d}^+(T,L) 
&\ge 
\inf_{\tilde\Mtd^{\vphantom{{{2^2}^2}}} \,\in\, \FOM_{d}^+(T)} 
\quad
\sup_{f \,\in\, \Fh(L) \vphantom{\wt \Fh(L)}} 
\quad
\left\{  f^+(x_{T+1}^{\tilde \Mtd}(f^+))- \min_{x \in \Simp} f^+(x) \right\} \\ 
&= 
\inf_{\tilde\Mtd^{\vphantom{{{2^2}^2}}} \,\in\, \FOM_{d}^+(T)} 
\quad
\sup_{f \,\in\, \Fh(L) \vphantom{\wt \Fh(L)}} 
\quad
\left\{  f(x_{T+1}^{\tilde \Mtd}(f^+))- \min_{x \in \Simp} f(x) \right\}
\ge \Risk_{d}(T,L).
\end{aligned}
\]
Here the identity holds since the mappings in~\eqref{eq:method-maps-conic} output points in~$\Simp$ (where~$f^+ = f$). 
For the last inequality, we use~\eqref{eq:grad-conic} and observe
that the execution on~$f^+$ of~$\smash{\tilde\Mtd \in \FOM_d^+(T)}$, as per~\eqref{eq:method-maps-conic},  produces the same query sequence
as the execution on~$f$ of the method
$
\Mtd \in \FOM_d(T),
$
defined by
\newcommand{\bg}{\xi}
\[
\begin{aligned}
\Phi_1 &:= \Psi_1, \\
\Phi_2(v_1, \bg_1) 
&:= \Psi_2 \left(\bg_1 + \big(v_1 -\langle \bg_1,  \Phi_1 \rangle \big) \ones \right), \\
\Phi_3(v_1,\bg_1, v_2, \bg_2) 
&:= \Psi_3 
		\left(
			\bg_1 + \big(v_1 -\langle \bg_1,  \Phi_1 \rangle \big) \ones,  \;\;
			\bg_2 + \big(v_2 -\langle \bg_2,  \Phi_2(v_1,\bg_1) \rangle \big) \ones
		\right), \\
&\;\;\vdots \vspace{-0.1cm} \\ \vspace{-0.1cm}
\bar\Phi_{T+1}(v_1,\bg_1, \dots, v_T, \bg_T) \\
:= \bar\Psi_{T+1} 
		\big(
		\bg_1 + \big(v_1 - &\langle \bg_1, \Phi_1 \rangle \big) \ones, \;\dots,\;
		\bg_T + \big(v_T -\langle \bg_T,  \Phi_T(v_1,\bg_1, \dots, v_{T-1},\bg_{T-1}) \rangle \big) \ones
		\big).
\end{aligned}
\]
This can be verified by induction over~$t$, in the same way as in the proof of Proposition~\ref{prop:transcript}.
This fact implies the desired inequality, since in~$\Risk_{d}(T,L)$ the infimum is over {\em all} methods in~$\FOM_d(T)$. 
\end{proof}
\section{Explicit formulation and emulation argument for Theorem~\ref{th:quantum}}
\label{app:quantum-proof}

We first adapt the definition of first-order methods to make them in line with the class~$\Qd(L)$, cf.~\eqref{eq:quantum-sandwich}. 
Namely, we define the methods in~$\smash{\FOM_d^{\mathsf{H}}(T)}$  as collections of mappings (cf.~\eqref{eq:method-maps}):
\begin{equation}
\label{eq:method-maps-quantum}
\begin{aligned}
\boldsymbol{\Phi_t}: \boldsymbol{\Omega}^{t-1}  &\to \ri(\Dens), \qquad t \in [T]; \\
\boldsymbol{\bar \Phi_{T+1}}: \boldsymbol{\Omega}^T &\to \Dens,
\end{aligned}
\end{equation}
where~$\bOmega = \R \times \spanof{I_d}^\perp$ and~$\bOmega^0$ is a singleton.  
A given method in~$\FOM_d^{\mathsf{H}}(T)$ sequentially queries
\begin{equation}
\label{eq:oracle-quantum}
\Orc_F^{\mathsf{H}}(X) 
:= 
(F(X),\bnabla F(X))
\end{equation}
and uses the responses to form the sequence (cf.~\eqref{eq:method-iterates})
\begin{equation}
\label{eq:method-iterates-quantum}
X_1^{\vphantom\Mtd} = \bPhi_1^{\vphantom\Mtd}, 
\;\;
X_2^{\vphantom \Mtd}(F) = \bPhi_2^{\vphantom\Mtd}(\Orc_F^{\mathsf{H}}(X_1^{\vphantom\Mtd})), 
\;\; \ldots, \;\; 
X_{T+1}^{\vphantom\Mtd}(F) = \bar\bPhi_{T+1}^{\vphantom\Mtd}(\,\Orc_F^{\mathsf{H}}(X_1^{\vphantom\Mtd}),\; \dots,\; \Orc_F^{\mathsf{H}}(X_{T}^{\vphantom \Mtd}(F))).
\end{equation}
In these terms, Theorem~\ref{th:quantum} claims that~$\Risk_d(T,L)$, cf.~\eqref{eq:minimax-risk}, is a lower bound for the quantum risk
\begin{equation}
\label{eq:minimax-risk-quantum}
\Risk_{d}^{\mathsf{H}}(T,L) \; 
:=
\inf_{{\bPhi_1, \dots, \bPhi_T, \bar\bPhi_{T+1}}^{\vphantom{\mathbf{H}}}} 
\quad
\sup_{F \,\in\, \Fh^{\mathsf{H}}(L)} 
\quad
\left\{ F(X_{T+1}(F))- \min_{X \in \Dens} F(X) \right\},
\end{equation}
where the infimum is over~$\FOM_d^{\mathsf{H}}(T)$, cf.~\eqref{eq:method-maps-quantum}--\eqref{eq:method-iterates-quantum}. 
By the argument in step~(\proofstep{1}) of the proof sketch of Theorem~\ref{th:quantum}, we have~$(f \circ \diag) \in \Fh^{\mathsf{H}}(L)$, whence
\begin{align}
\Risk_{d}^{\mathsf{H}}(T,L) \; 
&\ge 
\inf_{{\bPhi_1, \dots, \bPhi_T, \bar\bPhi_{T+1}}^{\vphantom{\mathsf{H}}}} 
\quad
\sup_{f \,\in\, \Fh(L)} 
\quad
\left\{ f(\diag(X_{T+1}(f \circ \diag)))- \min_{X \in \Dens} f(\diag(X)) \right\} \notag\\
&=
\inf_{{\bPhi_1, \dots, \bPhi_T, \bar\bPhi_{T+1}}^{\vphantom{\mathsf{H}}}} 
\quad
\sup_{f \,\in\, \Fh(L)} 
\quad
\left\{ f(\diag(X_{T+1}(f \circ \diag)))- \min_{x \in \Simp} f(x) \right\}\,.
\label{eq:quantum-risk-chain}
\end{align}
Combining~$\bnabla [f \circ \diag](X) =  \Diag(\bnabla f(\diag(X)))$ with~\eqref{eq:oracle-quantum}--\eqref{eq:method-iterates-quantum}, for~$X_t = X_t(f \circ \diag)$  we get
\[
\begin{aligned}
X_{t} 
&= \bPhi_t(f(\diag(X_1)), \Diag(\bnabla f(\diag(X_1))), \dots, f(\diag(X_{t-1})), \Diag(\bnabla f(\diag(X_{t-1}))))
\end{aligned}
\]
when~$2 \le t \le T$, and
\[
\begin{aligned}
X_{T+1} &= \bar\bPhi_{T+1}(f(\diag(X_1)), \Diag(\bnabla f(\diag(X_1))), \dots, f(\diag(X_{T})), \Diag(\bnabla f(\diag(X_{T})))).
\end{aligned}
\]
Since in both cases~$X_t$ enters the right-hand side  only through the diagonal~$x_t := \diag(X_t)$, we get
\[
\begin{aligned}
x_{t}(f) 
&= \Phi_t(f(x_1), \bnabla f(x_1), \dots, f(x_{t-1}(f)), \bnabla f(x_{t-1}(f))), \quad 2 \le t \le T; \\
x_{T+1}(f) 
&= \bar\Phi_{T+1}(f(x_1), \bnabla f(x_1), \dots, f(x_{T}(f)), \bnabla f(x_{T}(f))),
\end{aligned}
\]
with~$\Phi_t: \Omega^{t-1} \to \ri(\Simp)$ defined by~$\Phi_t(v_1, g_1,\dots, v_{t-1}, g_{t-1}) = \bPhi_t(v_1, \Diag(g_1),\dots, v_{t-1}, \Diag(g_{t-1}))$; ditto for~$\bar\Phi_{T+1}$.  
These maps define a method in~$\FOM_d(T)$ with transcript~$(x_t,f(x_t),\bnabla f(x_t))_{t \in [T]}$ consistent with~$f \in \Fh(L)$;  plugging the reported point~$\diag(X_{T+1}(f \circ \diag)) = x_{T+1}(f)$ in~\eqref{eq:quantum-risk-chain},
\[
\Risk_{d}^{\mathsf{H}}(T,L) \; 
\ge 
\inf_{{\Phi_1, \dots, \Phi_T, \bar\Phi_{T+1}}^{\vphantom{\mathbf{H}}}} 
\quad
\sup_{f \,\in\, \Fh(L)} 
\quad
\left\{ f(x_{T+1}(f))- \min_{x \in \Simp} f(x) \right\} 
= \Risk_d(T,L). \qquad \qed
\]

\section{Proof of Lemma~\ref{lem:entropy-interpolation}} 
\label{app:interpol}

The gradient and convex conjugate of~$h_\e$ are~$\nabla_{\star} h_\e(x)=\log x$ and $h_\e^*(\xi)= \langle \ones, \exp(\xi) \rangle$, respectively. 

Lift~$\cI = \{(x_i,f_i,\xi_i)\}_{i \in [N]}$ to $\tilde\cI := \{(x_i,f_i, \tilde \xi_i)\}_{i \in [N]}$, where~$\tilde \xi_i:= \xi_i + r_i\ones$; cf.~\eqref{eq:interpol-quantities}. 
By~\eqref{eq:grad-conic}, we have~$(f(x_i),\nabla f(x_i))= (f_i,\xi_i)$ if and only if~$(f^+(x_i),\nabla_{\star} f^+(x_i))=(f_i,\tilde \xi_i)$. 
Meanwhile,~\cite[Thm.~5.11]{DragomirThesis2021} gives equivalent conditions of the existence of~$\tilde f \in \Fh^+(L)$ interpolating the data~$\tilde\cI$: for all~$i,j \in [N],$
\begin{equation}
\label{eq:interpol-conditions-unnormalized}
 A_{ij}:=f_i-f_j-\ip{\tilde \xi_j}{x_i-x_j}
 \ge L D_{h_\e^*}\big(\log x_i-{L^{-1}}{(\tilde \xi_i-\tilde \xi_j)},\log x_i\big).
\end{equation}
Since~$x_i,x_j\in\Simp$, we have~$A_{ij}=f_i-f_j-\ip{\xi_j}{x_i-x_j}$. Direct expansion of the right-hand side gives
\begin{align*}
D_{h_\e^*}(\log x_i+{L^{-1}}{(\tilde \xi_j-\tilde \xi_i)},\log x_i)
 &=\exp({L^{-1}}{(r_j-r_i)})
	\ip{x_i}{\exp({L^{-1}}{(\xi_j-\xi_i)})}
    -\ip{x_i}{L^{-1}(\tilde \xi_j-\tilde \xi_i)}-1\\
 &=L^{-1}A_{ij}
 	+\exp({L^{-1}}{(r_j-r_i)})
   	\ip{x_i}{\exp({L^{-1}}{(\xi_j-\xi_i})}-1.
\end{align*}
Thus~\eqref{eq:interpol-conditions-unnormalized} is equivalent to
$
\exp({L^{-1}}{(r_j-r_i)})
\ip{x_i}{\exp({L^{-1}}{(\xi_j-\xi_i})} \le 1,
$
that is to~\eqref{eq:entropy-interpolation}; cf.~\eqref{eq:interpol-quantities}.

Let us draw conclusions. 
If~$f \in \Fh(L)$ interpolates~$\cI$, then there exists a function in~$\Fh^+(L)$, namely the 1-homogeneous extension~$f^+$ of~$f$, that interpolates~$\tilde \cI$, so~\eqref{eq:entropy-interpolation} holds.
Conversely,~\eqref{eq:entropy-interpolation} implies the existence of~$\smash{\tilde f \in \Fh^+(L)}$ that interpolates~$\tilde \cI$, but then the restriction~$f$ of~$\smash{\tilde f}$ to~$\Simp$ interpolates~$\cI$: indeed,~$\tilde f(x_i) = f(x_i)$ since~$x_i \in \Simp$, and~$\bnabla f(x_i) = \ProjTan(\nabla_{\star} \tilde f(x_i)) = \ProjTan(\tilde \xi_i) = \xi_i$. 
\qed

\bibliographystyle{plain}
\bibliography{references}

\end{document}